\title{Generalized symplectic symmetric spaces}
\author{Maciej Boche\'nski and Aleksy Tralle}
\begin{document}

\newtheorem{theorem}{Theorem}
\newtheorem{proposition}{Proposition}
\newtheorem{lemma}{Lemma}
\newtheorem{definition}{Definition}
\newtheorem{example}{Example}
\newtheorem{note}{Note}
\newtheorem{corollary}{Corollary}
\newtheorem{rem}{Remark}

\maketitle{}

\begin{abstract}
Bieliavsky introduced and investigated a class of symplectic symmetric spaces, that is, symmetric spaces endowed with a symplectic structure invariant with respect to symmetries. 
Since the theory of symmetric spaces has essential and interesting generalizations, we ask a question about their possible symplectic versions. In this paper we do obtain such generalization, and, in particular, give a list of all symplectic $3$-symmetric manifolds with simple groups of transvections. We also show a method of constructing semisimple (noncompact) symplectic $k$-symmetric spaces from a given (compact) K\"ahler k-symmetric space. 
\end{abstract}

\section{Introduction}

A homogeneous $k$-symmetric space is a homogeneous space generated by a Lie group automorphism of order $k$.  It is a natural generalization of the concept of a symmetric space. The theory of generalized symmetric spaces was worked out by  Fedenko and Kowalski \cite{f}, \cite{kow}. In \cite{b1} Bieliavsky built a symplectic version of the theory of symmetric spaces. In this paper he expressed the classification problem of symplectic symmetric spaces in terms of Lie algebras. In particular, a classification of semisimple symplectic symmetric spaces was obtained by studying infinitesimal objects called symplectic triples. Let $X=(G/K, \omega , s)$ be a simply connected, symmetric symplectic space. One can uniquely identify this manifold with the appropriate triple (called the symplectic triple): $(\mathfrak{g}, \sigma, \Omega ).$ The Lie algebra $\mathfrak{g}$ of a Lie group $G$ has a reductive decomposition into $+1$ and $(-1)$ eigenspaces of an involutive  automorphism $\sigma$ of $\mathfrak{g}$:
$$\mathfrak{g}=\mathfrak{h}+\mathfrak{m}$$
It is shown, that the existence of an invariant 2-form $\Omega$, whose restriction to $\mathfrak{m} \times \mathfrak{m}$ is a symplectic structure, is equivalent to the existence of an element $Z$ in the center $Z(\mathfrak{h}),$ which has the injective adjoint representation on $\mathfrak{m}.$ Then it is proved, that in the symmetric semisimple case, every nonzero element in $Z(\mathfrak{h})$ has this property. This reduces the problem of the classification of simple symplectic spaces to finding appropriate Lie algebras  in  Berger's  list of simple involutive Lie algebras \cite{be}.
Our aim is to extend this result to the class of  $k$-symmetric case. This is not straightforward for several reasons (for example, Berger's list is not applicable). However, using general techniques from \cite{kow} together with modifications of Bieliavsky's arguments and a quite subtle kind of "duality" between $3$-symmetric spaces of compact and non-compact type \cite{g1}, we are able to extend some of his results to the case of generalized symmetric spaces.  In particular, we get a classification of $3$-symmetric spaces with simple transvection groups. The latter is given in Tables 1-3 at the end of the article. Our work consists of three steps: 
\begin{enumerate}

\item  Description of symplectic $k$-symmetric spaces and the equivalence between $k$-symmetric symplectic spaces and $k$-symmetric symplectic triples. This part mainly follows the line of arguments in  \cite{kow} and \cite{b1},
\item Establishing the  properties of $k$-symmetric triples and a connection between symplectic forms and elements of the center of $\mathfrak{h},$ 
\item A classification theorem.
\end{enumerate}
Methods used in this paper are based on classical Lie group theory. If necessary the reader may consult \cite{he} and \cite{ov}. Our notation and terminology are closed to these sources, therefore, we use this material without further explanations. Our basic reference for symplectic manifolds is \cite{mcds}

The main results are contained in Theorems \ref{thm:triples},\ref{thm:sympl-descr}, \ref{thm:classification} and Tables 1-2.

\section{$k$-symmetric spaces}

\begin{definition}
{\rm A {\it regular $k$-symmetric symplectic manifold} is a triple $(M,\omega,s)$, where $M$ is a manifold with a symplectic form $\omega$, and the family of diffeomorphisms
$$s=\{s_{x}:M\rightarrow M, x\in M\}$$ 
satisfies for all $x,y \in M$ the following conditions:}
\begin{enumerate}
	\item $s_{x}(x)=x$,
	\item $s_{x}$ {\rm is a symplectomorphism, such that $s_{x}^{k}=id$ and $k\geq2$ is the least integer with this property,
	\item $s_{x}\circ s_{y}=s_{z}\circ s_{x}$ for $z=s_{x}(y)$,
	\item the fixed point $x$ of $s_{x}$ is isolated.}
\end{enumerate}
\label{dd2}
\end{definition}

We  begin with a general description of  $k$-symmetric spaces following \cite{kow}. 

\begin{definition}
{\rm Let there be given a regular $k$-symmetric manifold $(M,s)$.{\sl The group of transvections $G(M)$} of $(M,s)$ is the group generated by all automorphisms of the form:} $s_{x}\circ s_{y}^{-1}, \ x,y\in M$. 
\end{definition}
\begin{definition}
{\rm Let there be given a regular $k$-symmetric manifold $(M,s)$. Consider the smooth multiplication $M\times M\rightarrow M$ defined by the formula $x\cdot y=s_x(y)$. The group $Aut(M)$ of diffeomorphisms preserving the multiplication will be called {\it the group of automorphisms of $(M,s)$.}}
\end{definition}
It is proved in \cite{kow} that $Aut(M)$ is a Lie group of transformations of $M$. We will need the Proposition below, which is proved in \cite{kow} (Theorems II.33 and II.35).

\begin{proposition}
Let $(M,\omega ,s)$ be a regular $k$-symmetric space. Then $G(M)$ is a connected normal subgroup of $Aut(M)$. Moreover $G(M)$ is a transitive Lie group of transformations of  $M$.
\end{proposition}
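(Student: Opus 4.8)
The plan is to reduce everything to the corresponding statements for the prolongation structure associated with $(M,s)$ and then invoke the results of Kowalski quoted as Theorems II.33 and II.35 of \cite{kow}; the only thing genuinely required here is to check that the symplectic form $\omega$ does not obstruct any of the steps, i.e.\ that working inside the symplectomorphism group rather than the full diffeomorphism group costs nothing. First I would fix a base point $o\in M$ and recall that, by \cite{kow}, $Aut(M)$ is a Lie transformation group of $M$ whose Lie algebra can be described via the ``derivation algebra'' of the multiplication $x\cdot y=s_x(y)$; since each $s_x$ is a symplectomorphism, the multiplication is compatible with $\omega$, and the canonical isotropy representation at $o$ preserves $\omega_o$, so $Aut(M)$ in fact sits inside $\mathrm{Symp}(M,\omega)$. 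This does not change the abstract group-theoretic conclusions, because $G(M)$ is defined purely in terms of the maps $s_x$.

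Next I would prove transitivity. The key local computation is the standard one: for $x\in M$ the differential $d(s_x\circ s_o^{-1})_o$ equals $d(s_x)_o$ composed with $d(s_o^{-1})_o=(d(s_o)_o)^{-1}$, and by condition (4) of Definition \ref{dd2} together with $s_x^k=\mathrm{id}$, the differential $d(s_o)_o$ has no eigenvalue $1$, so that $\mathrm{id}-d(s_o)_o$ is invertible on $T_oM$. Using the symmetry-space identity (3), one shows that the map $x\mapsto s_x\circ s_o^{-1}$ is a local diffeomorphism near $o$ sending $o$ to $o$ with surjective differential onto $T_oM$ (after composing with the exponential-type chart furnished by the multiplication), hence the $G(M)$-orbit of $o$ is open. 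Since $M$ is connected (this is implicit in the setup; Kowalski's structure theory is carried out on connected manifolds — alternatively restrict to the connected component, which is $s$-invariant) and orbits partition $M$, the open orbit is all of $M$. This is the step I expect to be the main obstacle to write cleanly, because it requires setting up enough of the infinitesimal machinery of \cite{kow} (the symmetric/regular-space connection and its geodesic symmetries) to justify the ``local diffeomorphism'' claim; however, since the excerpt explicitly allows citing Theorems II.33 and II.35, this can be compressed to a reference.

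Finally, normality and connectedness of $G(M)$ in $Aut(M)$: connectedness follows because $G(M)$ is generated by the maps $s_x\circ s_y^{-1}$, each of which lies on a one-parameter family obtained by flowing $y$ along paths to $x$ (equivalently, each such transvection is a product of ``squares'' $s_x\circ s_y^{-1}$ that are connected to the identity through the manifold's own connectedness), so $G(M)$ is generated by a connected set containing $\mathrm{id}$ and is therefore connected. For normality, let $\varphi\in Aut(M)$; since $\varphi$ preserves the multiplication, $\varphi\circ s_x\circ\varphi^{-1}=s_{\varphi(x)}$ for all $x\in M$, whence $\varphi\circ(s_x\circ s_y^{-1})\circ\varphi^{-1}=s_{\varphi(x)}\circ s_{\varphi(y)}^{-1}\in G(M)$; as these elements generate $G(M)$, conjugation by $\varphi$ preserves $G(M)$. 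Combining the three parts gives the Proposition. Throughout, the presence of $\omega$ is used only to note that all maps in sight are symplectomorphisms, so that the classical $k$-symmetric-space arguments of \cite{kow} apply verbatim.
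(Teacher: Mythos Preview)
The paper gives no proof of this Proposition at all: it is simply attributed to Kowalski (Theorems II.33 and II.35 of \cite{kow}), with nothing further written. Your proposal is therefore consistent with the paper's treatment---you even observe that the argument ``can be compressed to a reference''---but it goes beyond the paper by sketching the mechanism (transitivity via invertibility of $\mathrm{id}-d(s_o)_o$ on $T_oM$, normality via $\varphi\circ s_x\circ\varphi^{-1}=s_{\varphi(x)}$, connectedness via path-connectedness of the generating set).

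One remark on your framing: the symplectic form plays no role in the statement or its proof. In the paper both $G(M)$ and $Aut(M)$ are defined for the underlying regular $k$-symmetric manifold $(M,s)$, purely through the multiplication $x\cdot y=s_x(y)$, with no reference to $\omega$. So your opening concern about whether $\omega$ might ``obstruct'' the argument is moot, and the side claim that $Aut(M)\subset\mathrm{Symp}(M,\omega)$ is neither needed nor immediate from the definitions as given. Dropping that digression, your sketch is a reasonable outline of what one finds in \cite{kow}.
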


\noindent
This Proposition enables us to  look at  $(M,s)$ as at a homogeneous space.

\begin{definition}
{\rm {\it A regular homogeneous $k$-manifold} is a triple $(G,H,\sigma)$, where $G$ is a connected Lie group, $H \subseteq G$ is a closed subgroup, and $\sigma :G \rightarrow G$ is an automorphism of G such that:}
\begin{enumerate}
	\item {\rm $\sigma^{k} =id$ and $k\geq2$ is the least integer with this property,
	\item ${(G^{\sigma})}^{o} \subset H \subset G^{\sigma}$ , where $G^{\sigma} =\{ g\in G \mid \sigma (g)=g \}$ , and ${(G^{\sigma})}^{o}$ is the identity component of $G^{\sigma}$.}
\end{enumerate}
\label{dd}
\end{definition}

\begin{proposition}[II.24 and II.45 in \cite{kow}]
For $(G,H,\sigma)$ let $A:=id- \sigma^{\ast},$  $\mathfrak{h}:=ker \ A ,$ $\mathfrak{m}:=Im \ A.$ Then $\mathfrak{h}$ is a Lie algebra of $H$ and the space $G/H$ is reductive with respect to the decomposition $\mathfrak{g}=\mathfrak{h}+\mathfrak{m}.$ 
\label{ff}
\end{proposition}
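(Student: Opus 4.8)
The plan is to reduce everything to the infinitesimal automorphism $\sigma^{\ast}\colon \mathfrak{g}\to\mathfrak{g}$ induced by $\sigma$, which satisfies $(\sigma^{\ast})^{k}=\mathrm{id}$ and hence is a semisimple operator whose minimal polynomial divides $t^{k}-1$. First I would record that, since $t^{k}-1$ has no repeated roots over $\mathbb{C}$, the operator $\sigma^{\ast}$ is diagonalizable over $\mathbb{C}$ with eigenvalues among the $k$-th roots of unity; in particular $\mathfrak{g}$ decomposes as the direct sum of the $+1$-eigenspace of $\sigma^{\ast}$ and the sum of the remaining eigenspaces. With $A=\mathrm{id}-\sigma^{\ast}$, the $+1$-eigenspace of $\sigma^{\ast}$ is exactly $\ker A$, while on the complementary eigenspaces $A$ acts as $\mathrm{id}-\zeta$ with $\zeta\neq 1$, hence invertibly; this shows simultaneously that $\mathfrak{g}=\ker A\oplus\operatorname{Im}A$ and that $\operatorname{Im}A$ is the sum of the $\zeta\neq1$ eigenspaces (equivalently $\mathfrak{m}=(\mathrm{id}-\sigma^{\ast})\mathfrak{g}$ coincides with $\operatorname{Im}A$ and $\mathfrak{h}=\ker A$ with the fixed subalgebra). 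The same statement holds over $\mathbb{R}$ because $A$ is a real operator, so its real image and real kernel complement each other.

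Next I would verify the bracket relations that make the decomposition $\mathfrak{g}=\mathfrak{h}+\mathfrak{m}$ reductive. Since $\sigma^{\ast}$ is a Lie algebra automorphism, each eigenspace $\mathfrak{g}_{\zeta}$ satisfies $[\mathfrak{g}_{\zeta},\mathfrak{g}_{\eta}]\subseteq\mathfrak{g}_{\zeta\eta}$. Taking $\zeta=1$ gives $[\mathfrak{h},\mathfrak{h}]\subseteq\mathfrak{h}$, so $\mathfrak{h}$ is a subalgebra, and $[\mathfrak{h},\mathfrak{g}_{\eta}]\subseteq\mathfrak{g}_{\eta}$ for every $\eta$; summing over $\eta\neq1$ yields $[\mathfrak{h},\mathfrak{m}]\subseteq\mathfrak{m}$, which is precisely the reductivity (i.e. $\mathrm{Ad}(H)$-invariance of $\mathfrak{m}$, once one knows $\mathfrak{h}=\mathrm{Lie}(H)$). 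Note that unlike the symmetric ($k=2$) case one does not get $[\mathfrak{m},\mathfrak{m}]\subseteq\mathfrak{h}$, only $[\mathfrak{m},\mathfrak{m}]\subseteq\mathfrak{h}+\mathfrak{m}$; this is expected and not needed here.

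It remains to identify $\mathfrak{h}=\ker A$ as the Lie algebra of $H$. Here I would use condition (2) of Definition \ref{dd}: $(G^{\sigma})^{o}\subset H\subset G^{\sigma}$ forces $\mathrm{Lie}(H)=\mathrm{Lie}(G^{\sigma})$, and the Lie algebra of the fixed-point subgroup $G^{\sigma}$ of the automorphism $\sigma$ is the fixed-point subalgebra $\{X\in\mathfrak{g}\colon \sigma^{\ast}X=X\}=\ker(\mathrm{id}-\sigma^{\ast})=\ker A$ — this is the standard fact that differentiating the defining equation $\sigma(g)=g$ gives $\sigma^{\ast}X=X$, together with the exponential map showing the fixed subalgebra integrates into $G^{\sigma}$. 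Assembling these pieces gives the Proposition. The only genuinely delicate point is the last one — matching $\mathrm{Lie}(H)$ with the algebraic kernel $\ker A$ — but it is entirely standard once condition (2) is invoked, so I expect no real obstacle; everything else is linear algebra for the finite-order operator $\sigma^{\ast}$ plus the multiplicativity of its eigenspace decomposition. In fact, since the statement is attributed to II.24 and II.45 of \cite{kow}, one may alternatively simply cite those results; I would include the short argument above for completeness.
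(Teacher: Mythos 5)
Your argument is correct, but it takes a different route from the one the paper sketches. The paper (in the discussion following the Fitting decomposition lemma) splits $\mathfrak{g}$ into the generalized kernel $\mathfrak{g}_{0A}$ and the stable image $\mathfrak{g}_{1A}$ of $A=\mathrm{id}-\nu$, and then uses the finite order of $\nu$ by hand: if $A^{2}X=0$ but $AX=Y\neq0$ with $\nu Y=Y$, iterating gives $\nu^{k}X=X-kY$, forcing $Y=0$ in characteristic zero, so $\mathfrak{g}_{0A}=\ker A$ and $\mathfrak{g}=\ker A\oplus\operatorname{Im}A$; the inclusion $[\mathfrak{h},\mathfrak{m}]\subseteq\mathfrak{m}$ is then obtained by writing $X=AZ$ and computing $[X,Y]=[Z,Y]-[\nu Z,\nu Y]=A[Z,Y]$. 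You instead invoke semisimplicity of $\sigma^{\ast}$ (minimal polynomial divides $t^{k}-1$), diagonalize over $\mathbb{C}$, and read off both the splitting and the bracket relations from $[\mathfrak{g}_{\zeta},\mathfrak{g}_{\eta}]\subseteq\mathfrak{g}_{\zeta\eta}$. Both rest on the same two facts (finite order of $\sigma^{\ast}$ and characteristic zero); the paper's version stays entirely over $\mathbb{R}$ with elementary manipulations, while yours is slicker and exposes the root-of-unity grading that underlies the whole theory of $k$-symmetric spaces. One small caveat, which applies equally to the paper: $[\mathfrak{h},\mathfrak{m}]\subseteq\mathfrak{m}$ gives $\mathrm{Ad}(H)$-invariance of $\mathfrak{m}$ only for connected $H$; since Definition \ref{dd} allows any $H$ with $(G^{\sigma})^{o}\subset H\subset G^{\sigma}$, the clean way to get full reductivity is to observe that every $h\in H$ satisfies $\sigma\circ\mathrm{conj}_{h}=\mathrm{conj}_{h}\circ\sigma$, so $\mathrm{Ad}(h)$ commutes with $\sigma^{\ast}$ and hence preserves both $\ker A$ and $\operatorname{Im}A$. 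Your identification of $\mathrm{Lie}(H)$ with $\ker A$ via condition (2) of Definition \ref{dd} is standard and fine.
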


\begin{definition}
{\rm $(G,H,\sigma)$ is said to be prime if $G$ acts effectively on $G/H$ and $[\mathfrak{m} ,\mathfrak{m}]_{\mathfrak{h}} =\mathfrak{h}$ (the index at the bracket indicates taking the $\mathfrak{h}$-component of a vector in $\mathfrak{g}$)}.
\end{definition}

\begin{theorem}[II.40 in \cite{kow}]
Let $o\in M$ and $k\geq 2.$ There is a one-to-one correspondence between the pointed, connected, regular $k$-manifolds $(M,s,o)$ and the prime, regular homogeneous $k$-manifolds $(G,H,\sigma).$
\label{t240}
\end{theorem}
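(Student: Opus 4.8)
The plan is to write down the two constructions relating the two classes of objects and then check that they are mutually inverse.

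\textbf{From a $k$-manifold to a homogeneous datum.} Given $(M,s,o)$, set $G:=G(M)$, which is a connected Lie group acting transitively on $M$ by the Proposition above, let $H$ be the isotropy subgroup of $G$ at $o$, and let $\sigma$ be conjugation by $s_o$ on $G$. Condition (3) of Definition \ref{dd2} says exactly that every $s_x$ preserves the product $x\cdot y=s_x(y)$, so $s_o\in\mathrm{Aut}(M)$; since $G(M)$ is normal in $\mathrm{Aut}(M)$, the map $\sigma$ is a well-defined automorphism of $G$ with $\sigma^k=\mathrm{id}$ because $s_o^k=\mathrm{id}$. For any $g\in\mathrm{Aut}(M)$ one has $g\circ s_o\circ g^{-1}=s_{g(o)}$; hence if $g(o)=o$ then $\sigma(g)=g$, so $H\subset G^\sigma$, and if $g$ lies in the connected group $(G^\sigma)^o$ then $g(o)$ is a fixed point of $s_o$, so the continuous map $g\mapsto g(o)$ is locally constant (the fixed point $o$ is isolated, Definition \ref{dd2}(4)) and therefore constantly $o$, i.e. $(G^\sigma)^o\subset H$. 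If $\sigma^m=\mathrm{id}$ with $m<k$, then $s_o^m$ commutes with the transitive group $G$ and fixes $o$, forcing $s_o^m=\mathrm{id}$, a contradiction; thus $k$ is minimal. Finally $G$ is effective (it is a group of diffeomorphisms of $M$), $\mathrm{Lie}(H)=\ker(\mathrm{id}-\sigma_\ast)=\mathfrak h$ with the reductive decomposition as in Proposition \ref{ff}, and $[\mathfrak m,\mathfrak m]_{\mathfrak h}=\mathfrak h$ because $G(M)$ is generated by the transvections $s_xs_y^{-1}$. So $(G,H,\sigma)$ is a prime, regular homogeneous $k$-manifold.

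\textbf{From a homogeneous datum to a $k$-manifold.} Given a prime $(G,H,\sigma)$, put $M:=G/H$, $o:=eH$, and $s_x:=\tau_g\circ s_o\circ\tau_g^{-1}$ for $x=gH$, where $\tau_g$ is left translation by $g$ and $s_o(g'H):=\sigma(g')H$; this $s_o$ is well defined and smooth since $\sigma|_H=\mathrm{id}$ (as $H\subset G^\sigma$), and $s_x$ is independent of the chosen $g$ for the same reason. Explicitly $s_{gH}(g'H)=g\,\sigma(g)^{-1}\sigma(g')\,H$, whence $s_x(x)=x$, $s_x^k=\tau_g s_o^{\,k}\tau_g^{-1}=\mathrm{id}$, and a direct manipulation of the product yields $s_x\circ s_y=s_{s_x(y)}\circ s_x$. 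The differential of $s_o$ at $o$ is the action of $\sigma_\ast$ on $T_oM\cong\mathfrak m=\mathrm{Im}(\mathrm{id}-\sigma_\ast)$, which has no eigenvalue $1$ since that eigenspace is $\mathfrak h\cap\mathfrak m=0$; hence $o$ is an isolated fixed point. If $s_o^m=\mathrm{id}$ then $\sigma^m$ acts trivially on $G/H$ and, $G$ being effective, $\sigma^m=\mathrm{id}$, so $m=k$. Thus $(M,s,o)$ is a pointed, connected, regular $k$-manifold.

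\textbf{The two maps are mutually inverse, and the main obstacle.} Starting from $(M,s,o)$, forming $(G,H,\sigma)$ and then $G/H$, the orbit map $gH\mapsto g(o)$ is a $G$-equivariant diffeomorphism $G/H\to M$ sending $o$ to $o$ and, by $g\circ s_o\circ g^{-1}=s_{g(o)}$, intertwining the symmetries, hence an isomorphism of pointed regular $k$-manifolds. Conversely, starting from a prime $(G,H,\sigma)$ and forming $(M,s,o)$, one computes from the formula above that each $s_xs_y^{-1}$ acts on $G/H$ as left translation by an element of $G$, so $G(M)$ is a connected normal subgroup of $G$; differentiating the curves $t\mapsto s_{\exp(tX)H}\circ s_o^{-1}$ shows $\mathrm{Lie}(G(M))\supset\mathfrak m$, hence $\mathrm{Lie}(G(M))\supset\mathfrak m+[\mathfrak m,\mathfrak m]_{\mathfrak h}=\mathfrak m+\mathfrak h=\mathfrak g$ by primeness, so $G(M)=G$, the isotropy at $o$ is $H$, and conjugation by $s_o$ is $\sigma$. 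Therefore the correspondence is a bijection on isomorphism classes. I expect the only steps beyond a formal bookkeeping of conjugations and connectedness arguments to be the two places where primeness meets the transvection group, $[\mathfrak m,\mathfrak m]_{\mathfrak h}=\mathfrak h$ in the first construction and $G(M)=G$ in the last, together with the verification that the reconstructed family $\{s_x\}$ obeys the regularity axiom $s_x\circ s_y=s_{s_x(y)}\circ s_x$; these rest on the infinitesimal analysis of $k$-symmetric spaces of \cite{kow}.
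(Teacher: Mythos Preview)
The paper does not prove this theorem at all: it is quoted verbatim as Theorem~II.40 from Kowalski's monograph \cite{kow} and used as a black box, so there is no ``paper's own proof'' to compare against. Your sketch follows the expected route of Kowalski's argument and is consistent with the auxiliary results the paper does quote (the transitivity and normality of $G(M)$ in $\mathrm{Aut}(M)$, Proposition~\ref{ff}, and Theorem~\ref{tt25}); in that sense it is the natural reconstruction.

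One small point: in the direction ``homogeneous datum $\Rightarrow$ $k$-manifold'' your justification that $s_o^m=\mathrm{id}$ forces $\sigma^m=\mathrm{id}$ via ``$G$ being effective'' is not quite the right lever. Effectiveness of the $G$-action tells you $H$ contains no nontrivial normal subgroup of $G$, but $s_o^m=\mathrm{id}$ only gives $g^{-1}\sigma^m(g)\in H$ for all $g$, which is not immediately a normality statement. The clean way is infinitesimal: $\sigma_*$ preserves the Fitting decomposition $\mathfrak g=\mathfrak h\oplus\mathfrak m$, so $\sigma_*^m-\mathrm{id}$ maps $\mathfrak m$ into $\mathfrak m\cap\mathfrak h=0$, whence $\sigma_*^m=\mathrm{id}$ and, $G$ being connected, $\sigma^m=\mathrm{id}$. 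With that adjustment, and with the two ``primeness'' steps you already flag (that $[\mathfrak m,\mathfrak m]_{\mathfrak h}=\mathfrak h$ for $G(M)$, and that $\mathrm{Lie}(G(M))=\mathfrak g$ in the converse) deferred to \cite{kow}, your outline is sound.
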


\noindent
Before proceeding to the symplectic case, we need one more observation.

\begin{lemma}[Fitting decomposition]
Let $W$ be a finite dimensional vector space and $A:W \rightarrow W$ be an automorphism. There is a unique decomposition $W=W_{0A}\oplus W_{1A}$ of $W$ into $A$-invariant subspaces, such that $A\mid_{W_{0A}}$ is nilpotent, and $A\mid_{W_{1A}}$ is an automorphism. We also have:
$$W_{1A}=\cap_{i=1}^{\infty}A^{i}(W) \ \ W_{0A}= \{ v \| A^{i}(v)=0 \ for \ some \ i \}. $$
\end{lemma}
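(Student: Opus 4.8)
The plan is to produce the decomposition explicitly from the stabilizing chains of kernels and images of the powers of $A$, and then to verify the four required properties ($A$-invariance, directness, and nilpotency resp. invertibility of the two restrictions) together with uniqueness. Throughout I treat $A$ as a linear endomorphism of $W$ (in the application $A=\mathrm{id}-\sigma^{\ast}$, which need not be invertible).

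First I would observe that since $\dim W<\infty$, the ascending chain $\ker A\subseteq\ker A^{2}\subseteq\cdots$ and the descending chain $\operatorname{Im}A\supseteq\operatorname{Im}A^{2}\supseteq\cdots$ both stabilize; by the rank--nullity theorem one can choose a single integer $n$ with $\ker A^{n}=\ker A^{n+j}$ and $\operatorname{Im}A^{n}=\operatorname{Im}A^{n+j}$ for all $j\geq 0$. I then set $W_{0A}:=\ker A^{n}$ and $W_{1A}:=\operatorname{Im}A^{n}$; these coincide with $\{v:\ A^{i}v=0 \text{ for some } i\}$ and $\bigcap_{i\geq 1}A^{i}(W)$ respectively, and both are $A$-invariant, since $A(\ker A^{n})\subseteq\ker A^{n}$ and $A(\operatorname{Im}A^{n})=\operatorname{Im}A^{n+1}=\operatorname{Im}A^{n}$.

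Next I would check $W=W_{0A}\oplus W_{1A}$. For the trivial intersection: if $v=A^{n}w\in W_{0A}$ then $A^{2n}w=A^{n}v=0$, so $w\in\ker A^{2n}=\ker A^{n}$ and hence $v=A^{n}w=0$. For the sum: given $v\in W$, since $A^{n}v\in\operatorname{Im}A^{n}=\operatorname{Im}A^{2n}$ we may write $A^{n}v=A^{2n}w$, and then $v=(v-A^{n}w)+A^{n}w$ with $v-A^{n}w\in\ker A^{n}=W_{0A}$ and $A^{n}w\in\operatorname{Im}A^{n}=W_{1A}$. The restriction $A\mid_{W_{0A}}$ is nilpotent because $A^{n}$ annihilates $\ker A^{n}$; and $A\mid_{W_{1A}}$ is injective, as $\ker A\cap W_{1A}\subseteq\ker A^{n}\cap\operatorname{Im}A^{n}=0$, hence an automorphism of the finite dimensional space $W_{1A}$ (alternatively, $A(\operatorname{Im}A^{n})=\operatorname{Im}A^{n}$ gives surjectivity directly).

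Finally, for uniqueness, suppose $W=U_{0}\oplus U_{1}$ is another $A$-invariant decomposition with $A^{m}\mid_{U_{0}}=0$ and $A\mid_{U_{1}}$ invertible. Enlarging $n$ if necessary so that $n\geq m$, writing $v=u_{0}+u_{1}$ and applying $A^{n}$ gives $A^{n}v=A^{n}u_{1}\in U_{1}$, so $\operatorname{Im}A^{n}\subseteq U_{1}$; conversely $A^{n}\mid_{U_{1}}$ is invertible, so $U_{1}=A^{n}(U_{1})\subseteq\operatorname{Im}A^{n}$, whence $W_{1A}=U_{1}$. Since also $U_{0}\subseteq\ker A^{m}\subseteq\ker A^{n}=W_{0A}$ and $\dim U_{0}=\dim W-\dim U_{1}=\dim W_{0A}$, we conclude $U_{0}=W_{0A}$. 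None of this is genuinely hard; the only points requiring a little care are the simultaneous stabilization of the kernel and image chains and the bookkeeping in the uniqueness step.
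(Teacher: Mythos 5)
Your proof is correct and complete; it is the standard argument via the stabilization of the chains $\ker A\subseteq\ker A^{2}\subseteq\cdots$ and $\operatorname{Im}A\supseteq\operatorname{Im}A^{2}\supseteq\cdots$. The paper itself gives no proof of this lemma at all (it is quoted as a classical fact, and the text that follows it is only the application to $A=\mathrm{id}-\nu$), so there is nothing to compare against; your write-up would serve as a proof the paper omits. You were also right to read the hypothesis ``$A$ is an automorphism'' as ``$A$ is an endomorphism'' --- as literally stated the lemma would force $W_{0A}=0$, and in the intended application $A=\mathrm{id}-\sigma^{\ast}$ is genuinely non-invertible.
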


Let $\mathfrak{g}$ be a Lie algebra with an automorphism $\nu$ of order $k$ and let $A:=id-\nu$. Take $\mathfrak{h}:=ker \ A$ and $\mathfrak{m}:=Im \ A,$ then $\mathfrak{g}=\mathfrak{h}+\mathfrak{m}$ is the Fitting decomposition of $\mathfrak{g}.$ It is sufficient to show, that $\mathfrak{h}=\mathfrak{g}_{0A},$ because $A(h+m)=A(m).$ Assume that $X\in \mathfrak{g}_{0A}$ and $X\notin \mathfrak{h}.$ Without loss of generality we get: $AX \neq 0$ and $A^{2}X=0.$ Thus, $X-\nu X=Y$ for some $Y\in \mathfrak{g}, \ \nu Y=Y,$, and, therefore:
$$\nu X=X-Y$$
$$\nu^{2} X=\nu X - \nu Y=X-2Y$$
$$...$$
$$\nu^{k} X=X-kY=X,$$
and hence $Y=0$,  a contradiction.
We also get 
$$[\mathfrak{h} ,\mathfrak{h}] \subseteq \mathfrak{h} \ and \ [\mathfrak{h} ,\mathfrak{m}] \subseteq \mathfrak{m} \eqno (*)$$
The first one is obvious. As for the second, one can notice that   $A \mid_{\mathfrak{m}}$ is an automorphism. For $X \in \mathfrak{m} , \ Y \in \mathfrak{h}$ there is $Z \in \mathfrak{m}$ such that $X=AZ=Z-\nu Z$. Finally
$$[X,Y]=[Z-\nu Z,Y]=[Z,Y]-[\nu Z, \nu Y]=A[Z,Y]\in \mathfrak{m}.$$

\noindent
Consequently, the pair $(\mathfrak{g}, \nu)$ gives rise to the simply connected regular homogeneous $k$-manifold $(G,H,\sigma)$, where  $\nu = \sigma_{\ast}.$

\begin{definition} {\rm A homogeneous space $G/H$ is called {\it reductive}, if there is a decomposition $\mathfrak{g}=\mathfrak{h}\oplus\mathfrak{m}$ such that $Ad_G(H)(\mathfrak{m})\subset\mathfrak{m}$.} 
\end{definition}
Note that if $H$ is connected, the reductivity condition is equivalent to $(*)$. It is clear from our discussion that regular homogeneous $k$-symmetric spaces are reductive. The property of reductivity will be used throughout the paper. More details can be found in \cite{kow}. 

\begin{definition}
{\rm The pair $(\mathfrak{g}, \nu)$ will be called {\sl $k$-symmetric}. It is effective, if $\mathfrak{h}$ does not contain any proper ideal of $\mathfrak{g}$ and prime, if it is effective and satisfies the equalty:
$$[\mathfrak{m} ,\mathfrak{m}]_{\mathfrak{h}} =\mathfrak{h}.$$
The decomposition $\mathfrak{g}=\mathfrak{h}+\mathfrak{m}$ is called {\it the canonical decomposition of a $k$-symmetric pair}}.
\end{definition}

\noindent
Let there be given a homogeneous $k$-symmetric manifold  $(G,H,\sigma)$. If the group $G$ acts effectively on $G/H,$ then the corresponding pair $(\mathfrak{g} ,\nu)$ is called effective.
Conversely, assume that $(\mathfrak{g} ,\nu)$ is effective.  Then the map $U \rightarrow ad(U) \mid_{\mathfrak{m}}$ defined on $\mathfrak{h}$ is injective. We have the following. Let $\mathfrak{n}$ be the set of all elements $U \in \mathfrak{h}$ such that $[U, \mathfrak{m} ]=0$. For $U^{'} \in \mathfrak{h}, \ U \in \mathfrak{n} , \ X \in \mathfrak{m}$ we have $[U,X]=0 \ and \ [X,U^{'}] \in \mathfrak{m}$ which implies
$$\left[ [U^{'},U],X \right] = \left[U^{'},[U,X] \right] + \left[ U,[X,U^{'}] \right]=0.$$
For $Y \in \mathfrak{h} , \ U \in \mathfrak{n} , \ X \in \mathfrak{m}$ we obtain $\left[ [Y,U],X \right] =0$. Hence $\mathfrak{n}$ is an ideal of $\mathfrak{g}$ and $\mathfrak{n} \subset \mathfrak{h}$, therefore $\mathfrak{n} =(0)$.

\begin{proposition}
Let $k\geq 2.$ There is a one-to-one correspondence between $k$-symmetric, prime pairs $(\mathfrak{g} , \nu )$ and simply connected prime homogeneous $k$-manifolds  $(G,H,\sigma)$. The group $G$ corresponds to $\mathfrak{g}$, $\nu = \sigma_{\ast}$ and $H$ is a connected subgroup with the Lie algebra $\mathfrak{h}=kerA$, where $A=id-\nu$.
\label{ff24}
\end{proposition}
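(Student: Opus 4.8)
The plan is to obtain the correspondence by integrating the Lie-algebra data to Lie-group data, leaning on the infinitesimal analysis already carried out above --- the Fitting decomposition, property $(*)$, and the vanishing $\mathfrak{n}=\{U\in\mathfrak{h}:[U,\mathfrak{m}]=0\}=(0)$ for an effective pair --- together with the standard dictionary between connected Lie groups and subgroups and Lie algebras and subalgebras. I first treat the passage from pairs to manifolds. Let $(\mathfrak{g},\nu)$ be a prime $k$-symmetric pair and let $\widetilde{G}$ be the connected, simply connected Lie group with Lie algebra $\mathfrak{g}$. By simple connectivity $\nu$ integrates to a unique $\widetilde{\sigma}\in\mathrm{Aut}(\widetilde{G})$ with $\widetilde{\sigma}_{\ast}=\nu$, and from $(\widetilde{\sigma}^{\,k})_{\ast}=\nu^{k}=\mathrm{id}$ and uniqueness of lifts one gets $\widetilde{\sigma}^{\,k}=\mathrm{id}$, with $k$ least because any relation $\widetilde{\sigma}^{\,j}=\mathrm{id}$ descends to $\nu^{j}=\mathrm{id}$. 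Put $\widetilde{H}:=(\widetilde{G}^{\widetilde{\sigma}})^{o}$; this is a closed connected subgroup whose Lie algebra is $\{X\in\mathfrak{g}:\nu X=X\}=\ker(\mathrm{id}-\nu)=\mathfrak{h}$. Then $(\widetilde{G},\widetilde{H},\widetilde{\sigma})$ satisfies Definition~\ref{dd}, hence is a regular homogeneous $k$-manifold, its canonical decomposition is the one of Proposition~\ref{ff}, namely $\mathfrak{g}=\mathfrak{h}+\mathfrak{m}$ with $\mathfrak{m}=\mathrm{Im}(\mathrm{id}-\nu)$, and the homotopy exact sequence of $\widetilde{H}\hookrightarrow\widetilde{G}\to\widetilde{G}/\widetilde{H}$ gives $\pi_{1}(\widetilde{G}/\widetilde{H})=0$ since $\widetilde{G}$ is simply connected and $\widetilde{H}$ connected.

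To land in the prime, i.e.\ effective, class I divide out the ineffective kernel. The kernel $N$ of the action $\widetilde{G}\to\mathrm{Diff}(\widetilde{G}/\widetilde{H})$ is the largest normal subgroup of $\widetilde{G}$ contained in $\widetilde{H}$; its identity component is a connected normal subgroup, hence corresponds to an ideal of $\mathfrak{g}$ lying in $\mathfrak{h}$, which is $(0)$ because the pair is effective, so $N$ is discrete, hence central in $\widetilde{G}$, and $\widetilde{\sigma}(N)=N$ as $\widetilde{\sigma}$ fixes $\widetilde{H}\supseteq N$ pointwise. Therefore $\widetilde{\sigma}$ descends to $\sigma\in\mathrm{Aut}(G)$ for $G:=\widetilde{G}/N$, the subgroup $H:=\widetilde{H}/N$ is closed, the order of $\sigma$ is still exactly $k$ ($\widetilde{G}$ connected, $N$ discrete), and $(G,H,\sigma)$ is a regular homogeneous $k$-manifold on which $G$ acts effectively, with $G/H\cong\widetilde{G}/\widetilde{H}$ simply connected and $[\mathfrak{m},\mathfrak{m}]_{\mathfrak{h}}=\mathfrak{h}$ inherited from the primeness of the pair; so it is a simply connected prime homogeneous $k$-manifold. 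Conversely, given a simply connected prime homogeneous $k$-manifold $(G,H,\sigma)$, the differential $\nu:=\sigma_{\ast}$ has order $k$, the decomposition $\mathfrak{g}=\mathfrak{h}+\mathfrak{m}$ is again the one of Proposition~\ref{ff}, effectiveness of the $G$-action forces $\mathfrak{h}$ to contain no nonzero ideal of $\mathfrak{g}$ (such an ideal would exponentiate to a nontrivial connected normal subgroup of $G$ inside $H$), and $[\mathfrak{m},\mathfrak{m}]_{\mathfrak{h}}=\mathfrak{h}$ carries over; so $(\mathfrak{g},\nu)$ is a prime $k$-symmetric pair.

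It remains to check that these two assignments are mutually inverse, and this is the step I expect to require the most care. Starting from a pair, forming $(G,H,\sigma)$ and differentiating returns $(\mathfrak{g},\nu)$, the $\sigma$-component being matched by uniqueness of lifts. The reverse composition starts from a simply connected prime $(G,H,\sigma)$, forms the pair $(\mathfrak{g},\nu)$, and re-runs the construction; one must show the group it outputs is $G$ itself. Here I would feed $\pi_{1}(G/H)=0$ into the homotopy exact sequence of $H\hookrightarrow G\to G/H$ to conclude both that $H$ is connected and that $\widetilde{G}/\widetilde{H}$ is the universal cover of $G/H$ (with $\widetilde{G}$ the universal cover of $G$ and $\widetilde{H}=(\widetilde{G}^{\widetilde{\sigma}})^{o}$); hence $\ker(\widetilde{G}\to G)\subseteq\widetilde{H}$, while effectiveness of the $G$-action forces $\ker(\widetilde{G}\to G)$ to coincide with the largest normal subgroup $N$ of $\widetilde{G}$ contained in $\widetilde{H}$. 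Thus the construction returns $\widetilde{G}/N=G$ and $\widetilde{H}/N=H$, and the one-to-one correspondence follows. The technical heart is precisely this interplay between simple connectivity of $G/H$, the effectiveness hypothesis and the vanishing $\mathfrak{n}=(0)$, which is what ensures that no data is lost in passing back and forth.
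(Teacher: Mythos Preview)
Your proof is correct and follows essentially the same route the paper indicates, though the paper does not supply an explicit proof of this proposition: it states Proposition~\ref{ff24} immediately after the preparatory discussion (the Fitting decomposition $\mathfrak{g}=\mathfrak{h}\oplus\mathfrak{m}$, the inclusions $(*)$, and the observation that effectiveness of the pair forces $\mathfrak{n}=\{U\in\mathfrak{h}:[U,\mathfrak{m}]=0\}=(0)$), treating the result as a direct consequence of that discussion together with the results quoted from Kowalski, in particular Theorem~\ref{t240}. Your argument fills in precisely the steps the paper leaves implicit --- the passage to the effective quotient $G=\widetilde{G}/N$, the check that $\sigma$ still has exact order $k$ on the quotient, and the verification via the homotopy sequence that the two assignments are mutually inverse through the identification $\ker(\widetilde{G}\to G)=N$ --- and your treatment of each of these points is sound.
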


\noindent
We also need the following.

\begin{proposition}
Let $(\mathfrak{g} ,\nu)$ be a $k$-symmetric pair in which $\mathfrak{g}$ is a simple Lie algebra. Then $(\mathfrak{g} ,\nu)$ is prime.
\label{psp}
\end{proposition}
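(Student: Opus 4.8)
The plan is to verify directly the two conditions in the definition of \emph{prime} — effectiveness, and the identity $[\mathfrak{m},\mathfrak{m}]_{\mathfrak{h}}=\mathfrak{h}$ — using only the simplicity of $\mathfrak{g}$, the reductivity relations $(*)$ established above, and the fact that the canonical decomposition $\mathfrak{g}=\mathfrak{h}\oplus\mathfrak{m}$ is a direct sum (it is a Fitting decomposition). First, effectiveness. Since $\nu$ has order $k\geq 2$, the operator $A=id-\nu$ is nonzero, hence $\mathfrak{m}=\mathrm{Im}\,A\neq\{0\}$ and therefore $\mathfrak{h}=\ker A\subsetneq\mathfrak{g}$. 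The only ideals of the simple algebra $\mathfrak{g}$ are $\{0\}$ and $\mathfrak{g}$, and the latter is not contained in $\mathfrak{h}$; so $\mathfrak{h}$ contains no nonzero ideal of $\mathfrak{g}$, i.e. $(\mathfrak{g},\nu)$ is effective.

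Next I would introduce the subspace $\mathfrak{a}:=[\mathfrak{m},\mathfrak{m}]+\mathfrak{m}\subseteq\mathfrak{g}$ and show it is an ideal. Using $[\mathfrak{h},\mathfrak{m}]\subseteq\mathfrak{m}$ from $(*)$ and the Jacobi identity, for $Y\in\mathfrak{h}$ and $X_1,X_2\in\mathfrak{m}$ one has $[Y,[X_1,X_2]]=[[Y,X_1],X_2]+[X_1,[Y,X_2]]\in[\mathfrak{m},\mathfrak{m}]$, so $[\mathfrak{h},[\mathfrak{m},\mathfrak{m}]]\subseteq[\mathfrak{m},\mathfrak{m}]$ and hence $[\mathfrak{h},\mathfrak{a}]\subseteq\mathfrak{a}$. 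For brackets with $\mathfrak{m}$, write $[X_1,X_2]=Y+X_3$ with $Y\in\mathfrak{h}$, $X_3\in\mathfrak{m}$ (the canonical decomposition being direct); then for $X\in\mathfrak{m}$, $[X,[X_1,X_2]]=[X,Y]+[X,X_3]\in\mathfrak{m}+[\mathfrak{m},\mathfrak{m}]=\mathfrak{a}$ again by $(*)$. Together with $[\mathfrak{m},\mathfrak{m}]\subseteq\mathfrak{a}$ this gives $[\mathfrak{m},\mathfrak{a}]\subseteq\mathfrak{a}$, hence $[\mathfrak{g},\mathfrak{a}]\subseteq\mathfrak{a}$. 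Since $\mathfrak{a}\supseteq\mathfrak{m}\neq\{0\}$ and $\mathfrak{g}$ is simple, $\mathfrak{a}=\mathfrak{g}$.

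Finally I would apply the projection $\pi_{\mathfrak{h}}\colon\mathfrak{g}\to\mathfrak{h}$ along $\mathfrak{m}$ to the equality $\mathfrak{g}=[\mathfrak{m},\mathfrak{m}]+\mathfrak{m}$. Since $\pi_{\mathfrak{h}}(\mathfrak{m})=\{0\}$, while $\pi_{\mathfrak{h}}([\mathfrak{m},\mathfrak{m}])=[\mathfrak{m},\mathfrak{m}]_{\mathfrak{h}}$ by definition of the index notation, linearity of $\pi_{\mathfrak{h}}$ yields $\mathfrak{h}=\pi_{\mathfrak{h}}(\mathfrak{g})=[\mathfrak{m},\mathfrak{m}]_{\mathfrak{h}}$, which is precisely the remaining primeness condition. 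The Jacobi-identity manipulations are routine; the only step requiring a little care is the consistent bookkeeping of $\mathfrak{h}$- and $\mathfrak{m}$-components via $(*)$ and the directness of $\mathfrak{g}=\mathfrak{h}\oplus\mathfrak{m}$. The one genuinely substantive point — where simplicity does the work — is recognizing that $[\mathfrak{m},\mathfrak{m}]+\mathfrak{m}$ is an ideal of $\mathfrak{g}$; the rest is formal.
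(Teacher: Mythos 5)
Your proof is correct and follows exactly the paper's route: the paper's entire proof is the remark that $\mathfrak{c}=\mathfrak{m}+[\mathfrak{m},\mathfrak{m}]$ is an ideal of the simple algebra $\mathfrak{g}$, and your argument simply writes out the verification of that fact together with the resulting projection onto $\mathfrak{h}$ and the (easy) effectiveness check. No discrepancies to report.
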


\begin{proof}
It suffices to notice, that $\mathfrak{c}:=\mathfrak{m}+[\mathfrak{m},\mathfrak{m}]$ is an ideal of $\mathfrak{g}.$
\end{proof}

\begin{definition}
{\rm A connected regular pointed $k$-symmetric space $(M,s,o)$ is called {\it simple}, if $G=G(M)$ is a simple Lie group.}
\end{definition}

Now it is easy to see that we can identify isomorphic classes of simply connected simple $k$-symmetric spaces $(M,s,o)$ with the isomorphic classes of simply connected prime homogeneous $k$-manifolds $(G,H,\sigma),$ and isomorphic classes of $k$-symmetric prime pairs $(\mathfrak{g} , \nu ).$ We can use interchangeably those objects as a description of a fixed $k$-symmetric manifold.
Now we must check, if the above construction works in the symplectic category. We need the following theorem:

\begin{theorem}[II.25 in \cite{kow}]
.
\begin{enumerate}
 \item Let $(G,H, \sigma )$ be a regular homogeneous $k$-manifold, $\pi :G \rightarrow G/H$ the canonical projection and $s$ the transformation of $G/H$ induced by $\sigma$, $s \circ \pi = \pi \circ \sigma$ on $G$.
 Let $G$ act on $G/H$ by left translations. Then putting:
   \begin{equation}
   s_{\pi (g)} = g\circ s\circ g^{-1} \ for \ each \ g\in G,
   \label{eq1}
   \end{equation}
 we obtain a well defined family $p=\{ s_{x} \mid x\in G/H \}$ of diffeomorphisms of $G/H$. $(G/H,p)$ is a connected, regular k-manifold and $G$ acts as a group of automorphisms of it.
 \item Conversely, let $(M,p,o)$ be a connected, regular $k$-manifold and $o\in M$ be a fixed point. Let $G$ be the identity component of the automorphism group $Aut(M)$ and $G_{o}$ the isotropy subgroup of $G$ at $o$. Define a map $\sigma :G \rightarrow Aut(M)$ by the formula
 $$\sigma (g)=s_{o} \circ g \circ s_{o}^{-1} \ for \ g\in G.$$ 
 Then $\sigma $ is an automorphism of $G$, and $(G,G_{o}, \sigma)$ is a regular homogeneous $k$-manifold. Also, $M\cong G/G_{o}$ and the symmetries $s_{x}$ are given by the formula \ref{eq1} for $s=s_{o}.$
\end{enumerate}
\label{tt25}
\end{theorem}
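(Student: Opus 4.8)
The statement being cited is Kowalski's fundamental correspondence, so the plan is simply to indicate how one verifies the axioms in each direction, working directly from Definitions \ref{dd2} and \ref{dd}.

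\textbf{Direction (1): from $(G,H,\sigma)$ to $(G/H,p)$.} First I would check that (\ref{eq1}) is independent of the chosen representative of $\pi(g)$. Since $H\subset G^{\sigma}$, the automorphism $\sigma$ fixes $H$ pointwise, so for $h\in H$ one has $s(h\cdot\pi(g))=\pi(\sigma(hg))=\pi(h\sigma(g))=h\cdot s(\pi(g))$; thus $s$ commutes with left translation by $h$, and hence $g\circ s\circ g^{-1}=gh\circ s\circ(gh)^{-1}$. With well-definedness in hand, the axioms of Definition \ref{dd2} (with ``symplectomorphism'' read as ``diffeomorphism'') are checked directly: $s_{\pi(g)}$ fixes $\pi(g)$ because $s(\pi(e))=\pi(\sigma(e))=\pi(e)$; $s_{\pi(g)}^{\,k}=g\circ s^{k}\circ g^{-1}=\mathrm{id}$ because $s^{k}$ is induced by $\sigma^{k}=\mathrm{id}$, and $k$ is minimal because under the identification $T_{\pi(e)}(G/H)\cong\mathfrak m$ one has $d(s_{\pi(e)})=\sigma_{\ast}|_{\mathfrak m}$, which has order exactly $k$ (indeed $\sigma_{\ast}$ has order $k$ on the connected group $G$ and is trivial on $\mathfrak h$); the same differential $\sigma_{\ast}|_{\mathfrak m}$ has no eigenvalue $1$ since $A|_{\mathfrak m}=(\mathrm{id}-\sigma_{\ast})|_{\mathfrak m}$ is an automorphism (as shown above via the Fitting decomposition), so $\pi(e)$ is an isolated fixed point of $s_{\pi(e)}$, and by homogeneity every $\pi(g)$ is an isolated fixed point of $s_{\pi(g)}$; finally, writing $s_{\pi(g)}=g\circ s\circ g^{-1}$ and using that $s$ intertwines left translation by any $a\in G$ with left translation by $\sigma(a)$, both the relation $s_{x}\circ s_{y}=s_{s_{x}(y)}\circ s_{x}$ and the fact that $G$ acts by automorphisms of the product $x\cdot y=s_{x}(y)$ follow by short computations.

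\textbf{Direction (2): from $(M,p,o)$ to $(G,G_{o},\sigma)$.} I would start by noting $s_{o}\in Aut(M)$: axiom (3) gives $s_{o}(s_{x}(y))=s_{s_{o}(x)}(s_{o}(y))$, i.e.\ $s_{o}$ preserves the product. Hence conjugation by $s_{o}$ is a continuous automorphism of $Aut(M)$ and so preserves its identity component $G$, giving an automorphism $\sigma$ of $G$; clearly $\sigma^{k}=\mathrm{id}$ since $s_{o}^{k}=\mathrm{id}$, and if $\sigma^{j}=\mathrm{id}$ for some $j<k$ then $s_{o}^{j}$ commutes with the transitive group $G$ (which contains the connected transitive group $G(M)$, by the first Proposition above) and fixes $o$, hence is the identity, contradicting minimality of $k$ in axiom (2). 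Next, for $g\in G_{o}$ and any $x$, $g(s_{o}(x))=g(o\cdot x)=g(o)\cdot g(x)=o\cdot g(x)=s_{o}(g(x))$, so $g$ commutes with $s_{o}$, i.e.\ $G_{o}\subset G^{\sigma}$; conversely if $g\in G^{\sigma}$ then $s_{o}(g(o))=g(s_{o}(o))=g(o)$, so $g(o)$ lies in the fixed-point set of $s_{o}$, which is discrete by axiom (4), and since the orbit map $g\mapsto g(o)$ is continuous and sends the connected set $(G^{\sigma})^{o}$ (with $e\mapsto o$) into this discrete set, it is constant, giving $(G^{\sigma})^{o}\subset G_{o}$. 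Therefore $(G,G_{o},\sigma)$ is a regular homogeneous $k$-manifold. The diffeomorphism $M\cong G/G_{o}$ is the standard orbit--stabilizer statement; under it the transformation induced by $\sigma$ is exactly $s_{o}$ (since $\pi(\sigma(a))=s_{o}as_{o}^{-1}(o)=s_{o}(a(o))$), and for $x=a(o)$ the formula (\ref{eq1}) reads $a\circ s_{o}\circ a^{-1}(y)=a(o\cdot a^{-1}y)=a(o)\cdot y=s_{x}(y)$, using once more that $a\in Aut(M)$.

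\textbf{Where the work is.} The axiom verifications are essentially bookkeeping; the two genuinely delicate points are the well-definedness of (\ref{eq1}), which rests exactly on $\sigma|_{H}=\mathrm{id}$, and the inclusion $(G^{\sigma})^{o}\subset G_{o}$ in Direction (2), which is where the ``isolated fixed point'' axiom is indispensable: without discreteness of the fixed-point set of $s_{o}$ a connected part of $G^{\sigma}$ could move $o$. A secondary subtlety is ensuring the order $k$ is preserved — handled in Direction (1) through the infinitesimal identification $d(s_{\pi(e)})=\sigma_{\ast}|_{\mathfrak m}$ and in Direction (2) through transitivity of $G$.
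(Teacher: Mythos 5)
This theorem is imported verbatim from Kowalski (II.25 in \cite{kow}); the paper gives no proof of its own, so there is nothing to compare against except the standard argument, which your reconstruction follows faithfully and correctly: well-definedness of (\ref{eq1}) via $\sigma|_{H}=\mathrm{id}$, order and isolatedness via $d(s_{\pi(e)})=\sigma_{\ast}|_{\mathfrak m}$ and the invertibility of $A|_{\mathfrak m}$ from the Fitting decomposition, and the two inclusions $\;(G^{\sigma})^{o}\subset G_{o}\subset G^{\sigma}$ in the converse direction. One small inaccuracy: the fixed-point set of $s_{o}$ need \emph{not} be discrete (for the geodesic symmetry of $\mathbb{CP}^{n}$ it contains a whole $\mathbb{CP}^{n-1}$); axiom (4) only makes $o$ an isolated fixed point. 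Your conclusion survives because that is all you need: the orbit map sends the connected set $(G^{\sigma})^{o}$ into $\mathrm{Fix}(s_{o})$ with $e\mapsto o$, and since $\{o\}$ is open in $\mathrm{Fix}(s_{o})$ (and closed, as a singleton), it is clopen in the connected image, forcing the image to be $\{o\}$ — equivalently, $G_{o}\cap(G^{\sigma})^{o}$ is an open subgroup of the connected group $(G^{\sigma})^{o}$ and hence all of it. With that wording repaired, the proposal is a complete and correct proof along the expected lines.
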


\noindent
Assume now, that $(M,\omega , s,o)$ is a $k$-symmetric, symplectic pointed space and let $(\mathfrak{g}, \nu )$ be the corresponding $k$-symmetric pair. There is a differential 2-form $\underline{\Omega}$ on $\mathfrak{g}$   given by the pullback of $\omega \mid_{o}$ by $(\pi^{\ast} \omega_{o})_{e},$ where $\pi :G \rightarrow M$ is the canonical projection. The form $\underline{\Omega}$ is closed  and nondegenerate on $\mathfrak{m}.$ The form $\omega$ is invariant under the action of $G\subseteq Aut(M, \omega ,s)$ and every $s_{x}$, so the form $\underline{\Omega}$ is $ad_{\mathfrak{h}}$-invariant and $\nu = \sigma_{\ast}$-invariant.
\\
Conversely - take $(\mathfrak{g}, \nu , \Omega)$ where $\Omega$ is $ad_{\mathfrak{h}}$- and $\nu$-invariant symplectic form on $\mathfrak{m}$. Let $(G,H, \sigma)$ be the corresponding simply connected, $k$-symmetric homogeneous manifold. Let $\underline{\Omega}$ be a 2-form on $\mathfrak{g}$ obtained from $\Omega$ by extending by 0 in $\mathfrak{h}$, so that $i(X)\underline{\Omega} =0$ for $x \in \mathfrak{h}.$ We get a nondegenerate $G$-invariant form $\omega$ on $G/H$ such that $(\underline{\omega} =\pi^{\ast} \omega)$. Let $C^{p} (\mathfrak{g} )$ denote the $p$-cochains in $\mathfrak{g}$ over reals and let $\delta$ be a coboundary operator for the trivial representation of $\mathfrak{g}$ on $\mathbb{R}.$ Because $\underline{\Omega}$ is $ad_{\mathfrak{h}}$-invariant, it is a Chevalley 2-cocycle. This means that $\omega $ is closed. We conclude that, by equation \ref{eq1}, $\omega$ is invariant with respect to symmetries $s_{x}$.
\newline

\begin{definition}
{\rm A triple $(\mathfrak{g} , \nu , \Omega)$ is called a {\sl prime} $k$-symmetric symplectic triple, if the following conditions are satisfied:}
 \begin{itemize}
  \item {\rm $\nu$ is an automorphism of $\mathfrak{g}$ such that $\nu^{k}=id$ and $k\geq 2$ is the least number with this property;}
  \item {\rm for the canonical decomposition $\mathfrak{g} =\mathfrak{h} \oplus \mathfrak{m},$ the Lie algebra $\mathfrak{h}$ does not contain any proper ideal of $\mathfrak{g}$ ($\Leftrightarrow \ \mathfrak{h}$ acts effectively $\mathfrak{m}$ ) and} 
  $$[\mathfrak{m} ,\mathfrak{m}]_{\mathfrak{h}} =\mathfrak{h}.$$
  \item {\rm $\Omega$ is an $ad_{\mathfrak{h}}$-invariant symplectic form on $\mathfrak{m}$ and $\nu \mid_{\mathfrak{m}}$ is a symplectomorphism of the symplectic vector space $(\mathfrak{m},\Omega)$}.
 \end{itemize}
{\rm If  $\mathfrak{g}$ is semisimple (simple, reductive), then the triple $(\mathfrak{g}, \nu)$ is called semisimple (simple, reductive)}.
\end{definition}

\begin{theorem}\label{thm:triples}
For every $k \geq 2$ there is a one-to-one correspondence between prime symplectic $k$-symmetric triples $(\mathfrak{g} , \nu , \Omega)$ and simply connected $k$-symmetric pointed spaces $(M,\omega,s,o)$. This correspondence is described as follows: $\mathfrak{g}$ is the Lie algebra of $G(M)$, $\nu$ is the differential of the automorphism of $G(M)$ generated by $s_{o}$, and $\Omega$ is the pullback of $\omega$ by the canonical projection.
\end{theorem}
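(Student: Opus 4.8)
The plan is to erect the correspondence on top of the purely algebraic equivalence already assembled in this section and then to check that the symplectic datum is transported faithfully in both directions. The underlying, non-symplectic layer is supplied by Theorem~\ref{t240}, Theorem~\ref{tt25} and Proposition~\ref{ff24}: a simply connected pointed $k$-symmetric space $(M,s,o)$ corresponds to a prime homogeneous $k$-manifold $(G,H,\sigma)$, which in turn corresponds to a prime $k$-symmetric pair $(\mathfrak g,\nu)$. One should observe here that the effectivity and primeness requirements pin $G$ down to be precisely the transvection group $G(M)$ --- the equality $[\mathfrak m,\mathfrak m]_{\mathfrak h}=\mathfrak h$ singles out $G(M)$ inside $\mathrm{Aut}(M)^{o}$, and $G(M)$ is transitive as recorded above --- so that indeed $\mathfrak g=\operatorname{Lie}(G(M))$. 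With this in hand only the forms remain to be matched.

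For the passage from spaces to triples I would take the prime pair $(\mathfrak g,\nu)$ attached to $(M,\omega,s,o)$ and set $\Omega:=\underline\Omega|_{\mathfrak m\times\mathfrak m}$, where $\underline\Omega=(\pi^{*}\omega)_{e}\in\Lambda^{2}\mathfrak g^{*}$ and $\pi:G\to M=G/H$ is the canonical projection. Three verifications are required. First, $\underline\Omega$ annihilates $\mathfrak h$, because $\ker d\pi_{e}=\mathfrak h$; hence $\Omega$ is a well-defined $2$-form on $\mathfrak m$. Second, $\Omega$ is nondegenerate, since $d\pi_{e}$ identifies $\mathfrak m$ with $T_{o}M$ and $\omega_{o}$ is nondegenerate. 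Third, $\Omega$ is $\mathrm{ad}_{\mathfrak h}$-invariant and $\nu|_{\mathfrak m}$ is a symplectomorphism of $(\mathfrak m,\Omega)$: the former follows from $G$-invariance of $\omega$ restricted to $H$ and differentiated at $e$, and the latter from $s_{o}$-invariance of $\omega$ together with $\nu=\sigma_{*}$ and the fact that $\sigma$ is conjugation by (the automorphism generated by) $s_{o}$. Thus $(\mathfrak g,\nu,\Omega)$ is a prime symplectic $k$-symmetric triple.

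For the converse, given $(\mathfrak g,\nu,\Omega)$ I would pass to the simply connected $(G,H,\sigma)$ furnished by Proposition~\ref{ff24} and form $M=G/H$ with the family of symmetries defined by~\eqref{eq1}; this is a simply connected $k$-symmetric pointed space with $o=eH$. Extending $\Omega$ by zero on $\mathfrak h$ produces a cochain $\underline\Omega\in\Lambda^{2}\mathfrak g^{*}$ which is horizontal and (since $H$ is connected) $\mathrm{Ad}_{H}$-invariant, hence basic for $\pi$; it therefore descends to a $G$-invariant $2$-form $\omega$ on $M$ with $\pi^{*}\omega=\underline\Omega$, and $\omega$ is nondegenerate because $\Omega$ is. Closedness of $\omega$ amounts to $\underline\Omega$ being a Chevalley--Eilenberg $2$-cocycle, $\delta\underline\Omega=0$, and this is the point at which $\mathrm{ad}_{\mathfrak h}$-invariance of $\underline\Omega$ (together with $i(X)\underline\Omega=0$ for $X\in\mathfrak h$ and the bracket relations $(*)$) is used. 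Finally, $\nu|_{\mathfrak m}$ being a symplectomorphism and $\nu=\sigma_{*}$ imply, via~\eqref{eq1}, that each $s_{x}$ is a $G$-conjugate of the symmetry $s$ and hence preserves $\omega$; so $(M,\omega,s,o)$ is $k$-symmetric and symplectic.

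It remains to check that the two assignments are mutually inverse. On the underlying pointed $k$-manifold this is exactly the content of Theorems~\ref{t240},~\ref{tt25} and Proposition~\ref{ff24}. For the symplectic part one verifies that restricting to $\mathfrak m\times\mathfrak m$ the extension-by-zero of $\Omega$ returns $\Omega$, and that conversely $(\pi^{*}\omega)_{e}$ is automatically the extension by zero of its restriction to $\mathfrak m$ (again because $\ker d\pi_{e}=\mathfrak h$); together with naturality of pullback under isomorphisms of the data this yields a bijection of isomorphism classes. The step I expect to be the main obstacle is the closedness argument, i.e.\ establishing $\delta\underline\Omega=0$: unlike the symmetric case $k=2$, where $[\mathfrak m,\mathfrak m]\subseteq\mathfrak h$ makes the cocycle identity on triples of $\mathfrak m$-vectors hold trivially, for $k\ge 3$ one has $[\mathfrak m,\mathfrak m]_{\mathfrak m}\ne 0$ and this identity has to be teased out of $\mathrm{ad}_{\mathfrak h}$-invariance and the Jacobi identity; the secondary nuisance is the effectivity bookkeeping needed to guarantee that $\mathfrak h$ acts faithfully on $\mathfrak m$ and $\mathfrak g=\operatorname{Lie}(G(M))$.
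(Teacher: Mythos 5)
Your proposal takes essentially the same route as the paper: the paper's ``proof'' is precisely the discussion preceding the theorem, which layers the symplectic datum on top of the algebraic correspondence from Theorem~\ref{t240}, Theorem~\ref{tt25} and Proposition~\ref{ff24}, transporting $\omega$ to $\underline{\Omega}=(\pi^{*}\omega)_{e}$ in one direction and extending $\Omega$ by zero on $\mathfrak{h}$ and invoking the Chevalley $2$-cocycle property for closedness in the other. The closedness step you flag as the main obstacle is exactly the point the paper also treats in one line (it simply asserts that $ad_{\mathfrak{h}}$-invariance makes $\underline{\Omega}$ a cocycle); in the semisimple case, which is all that is used later, this is supplied by Lemma~\ref{lemma:whitehead}, since $H^{2}(\mathfrak{g})=0$ gives $\underline{\Omega}=-da$ and hence $\delta\underline{\Omega}=0$.
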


\section{Singling out symplectic $k$-symmetric triples}

In this section we will effectively describe how one can single out a $k$-symmetric symplectic pair in the class of  $k$-symmetric  triples. We begin with the following proposition.

\begin{proposition}
If $(\mathfrak{g}, \nu, \Omega)$ is semisimple then the Killing form of $\mathfrak{g}$ is nondegenerate on $\mathfrak{h}$ and on $\mathfrak{m}$.
\end{proposition}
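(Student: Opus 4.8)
The plan is to exploit the $\nu$-invariance of both the Killing form $B$ of $\mathfrak g$ and the decomposition $\mathfrak g=\mathfrak h\oplus\mathfrak m$, together with the fact that $\mathfrak g$ is semisimple so that $B$ is nondegenerate on all of $\mathfrak g$. The key observation is that $\mathfrak h$ and $\mathfrak m$ are $B$-orthogonal. To see this, recall from the Fitting-decomposition discussion that $\mathfrak h=\ker(id-\nu)$ and $\mathfrak m=\mathrm{Im}(id-\nu)$. For $X\in\mathfrak h$ and $Y=Z-\nu Z\in\mathfrak m$ one computes, using $B(\nu\cdot,\nu\cdot)=B(\cdot,\cdot)$ and $\nu X=X$,
$$B(X,Y)=B(X,Z)-B(X,\nu Z)=B(X,Z)-B(\nu X,Z)=B(X,Z)-B(X,Z)=0.$$
Hence $\mathfrak h\perp_B\mathfrak m$.

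From $B$-orthogonality the nondegeneracy statement follows by a standard linear-algebra argument. Since $B$ is nondegenerate on $\mathfrak g$ and $\mathfrak g=\mathfrak h\oplus\mathfrak m$ with $\mathfrak h\perp_B\mathfrak m$, the radical of $B|_{\mathfrak h}$ is contained in the radical of $B$ on $\mathfrak g$: indeed, if $X\in\mathfrak h$ satisfies $B(X,\mathfrak h)=0$, then since also $B(X,\mathfrak m)=0$ by orthogonality we get $B(X,\mathfrak g)=0$, forcing $X=0$. Thus $B|_{\mathfrak h}$ is nondegenerate, and the identical argument gives nondegeneracy of $B|_{\mathfrak m}$.

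I do not expect a serious obstacle here; the only thing to be careful about is justifying $B(\nu\cdot,\nu\cdot)=B(\cdot,\cdot)$, which holds because $\nu$ is a Lie algebra automorphism and the Killing form is invariant under all automorphisms ($B(\nu U,\nu V)=\mathrm{tr}(\mathrm{ad}(\nu U)\,\mathrm{ad}(\nu V))=\mathrm{tr}(\nu\,\mathrm{ad}(U)\,\mathrm{ad}(V)\,\nu^{-1})=B(U,V)$). One should also note that semisimplicity of $\mathfrak g$ is exactly what guarantees nondegeneracy of $B$ on $\mathfrak g$ (Cartan's criterion), which is the input the whole argument rests on; the $ad_{\mathfrak h}$-invariance of $\Omega$ and the symplectic structure play no role in this particular proposition.
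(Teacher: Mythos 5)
Your proof is correct and follows essentially the same route as the paper: both establish $B$-orthogonality of $\mathfrak h=\ker(id-\nu)$ and $\mathfrak m=\mathrm{Im}(id-\nu)$ from the $\nu$-invariance of the Killing form, and then deduce nondegeneracy on each summand from nondegeneracy on $\mathfrak g$ (a step the paper leaves implicit but you rightly spell out). The only cosmetic remark is that the intermediate equality $B(X,\nu Z)=B(\nu X,Z)$ is not the invariance identity itself, but it is valid here because both sides equal $B(X,Z)$ once one uses $\nu X=X$ and $B(\nu\cdot,\nu\cdot)=B(\cdot,\cdot)$.
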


\begin{proof}
If $(\mathfrak{g}, \nu, \Omega)$ is a semisimple $k$-symmetric triple then the Killing form $B$ of $\mathfrak{g}$ is nondegenerate. Moreover:
$$B(x,y)=B(\nu x, \nu y) \ for \ x,y \in \mathfrak{g} $$
Thus for $x\in \mathfrak{h}$ and $y \in \mathfrak{g}$ we have
$$B(x,y)=B(x, \nu y) \ \Rightarrow \ B(x, y-\nu y)=0 \ \Rightarrow \ B(x,Ay)=0$$
But $A(\mathfrak{g})=\mathfrak{m}$ so $\mathfrak{h}$ and $\mathfrak{m}$ are $B$-orthogonal.
\end{proof}

\noindent
The following  fact is well known (one can consult \cite{r}). 

\begin{lemma}\label{lemma:whitehead}
Let $\mathfrak{g}$ be a semisimple finite-dimensional Lie algebra over a field $\mathbb{K}$ of characteristic 0. Let $M$ be a finite-dimensional $\mathfrak{g}$- module. Then:
\begin{enumerate}
\item $H^{1}(\mathfrak{g},M)=0,$
\item $H^{2}(\mathfrak{g},M)=0.$
\end{enumerate}
\end{lemma}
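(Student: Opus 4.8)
The plan is to give the standard Casimir-operator argument, proving the slightly stronger statement that $H^n(\mathfrak{g},M)=0$ for all $n\geq 1$; the cases $n=1,2$ are then special instances. Since $H^n(\mathfrak{g},-)$ is an additive functor and a short exact sequence $0\to M'\to M\to M''\to 0$ of $\mathfrak{g}$-modules induces a long exact cohomology sequence, an induction on $\dim M$ reduces both assertions to the case where $M$ is irreducible: if $M'$ is a proper nonzero submodule with quotient $M''$, then in $H^i(\mathfrak{g},M')\to H^i(\mathfrak{g},M)\to H^i(\mathfrak{g},M'')$ the outer terms vanish by the inductive hypothesis, hence $H^i(\mathfrak{g},M)=0$ for $i=1,2$. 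This reduction uses only homological algebra, not Weyl's complete reducibility theorem, so no circularity arises.

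So assume $M$ is irreducible. If $M$ is the trivial module $\mathbb{K}$, then $H^1(\mathfrak{g},\mathbb{K})=\mathrm{Hom}(\mathfrak{g}/[\mathfrak{g},\mathfrak{g}],\mathbb{K})=0$ since a semisimple Lie algebra is perfect, and $H^2(\mathfrak{g},\mathbb{K})=0$ since every central extension $0\to\mathbb{K}\to\mathfrak{e}\to\mathfrak{g}\to 0$ splits: here $\mathbb{K}$ is the radical of $\mathfrak{e}$, and a Levi subalgebra of $\mathfrak{e}$ is mapped isomorphically onto $\mathfrak{g}$, providing a section. If $M$ is nontrivial, let $\mathfrak{g}_1$ be the sum of those simple ideals of $\mathfrak{g}$ not contained in $\ker\rho_M$, where $\rho_M\colon\mathfrak{g}\to\mathfrak{gl}(M)$ is the representation; then $\mathfrak{g}=\ker\rho_M\oplus\mathfrak{g}_1$, the two summands commute, and $\rho_M$ embeds $\mathfrak{g}_1$ as a semisimple subalgebra of $\mathfrak{gl}(M)$. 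Consequently the trace form $\beta(x,y)=\mathrm{tr}_M(\rho_M(x)\rho_M(y))$ is nondegenerate on $\mathfrak{g}_1$, since its radical would be a solvable ideal of $\mathfrak{g}_1$, hence zero, by Cartan's solvability criterion. Pick dual bases $\{e_i\},\{f_i\}$ of $\mathfrak{g}_1$ with respect to $\beta$ and set $c=\sum_i e_if_i\in U(\mathfrak{g}_1)\subseteq U(\mathfrak{g})$; since $\ker\rho_M$ commutes with $\mathfrak{g}_1$, the element $c$ is central in $U(\mathfrak{g})$, and $\mathrm{tr}_M(\rho_M(c))=\sum_i\beta(e_i,f_i)=\dim\mathfrak{g}_1\neq 0$ in $\mathbb{K}$. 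As $\rho_M(c)$ commutes with $\rho_M(\mathfrak{g})$, its kernel is a $\mathfrak{g}$-submodule of the irreducible $M$, proper by the trace computation, hence zero; so $\rho_M(c)$ is an isomorphism of $M$.

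Now I would exploit the two descriptions of the action of the central element $c$ on $H^\bullet(\mathfrak{g},M)$. On one side, the $\mathfrak{g}$-module map $\rho_M(c)\colon M\to M$, being an isomorphism, induces an isomorphism of $H^n(\mathfrak{g},M)$ for every $n$. On the other side, on the Chevalley--Eilenberg complex $C^\bullet(\mathfrak{g},M)=\mathrm{Hom}(\Lambda^\bullet\mathfrak{g},M)$ each $x\in\mathfrak{g}$ acts by the Lie-derivative operator $\theta(x)$, and the Cartan identity $\theta(x)=\delta\,\iota(x)+\iota(x)\,\delta$ holds ($\iota(x)$ being contraction, $\delta$ the differential); hence $c$ acts on $C^\bullet(\mathfrak{g},M)$ as $\delta h+h\delta$ with $h=\sum_i\theta(e_i)\,\iota(f_i)$, i.e. as a chain map homotopic to zero, so it induces the zero map on $H^n(\mathfrak{g},M)$ for $n\geq 1$. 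Since the action on $H^\bullet(\mathfrak{g},M)$ of a central element of $U(\mathfrak{g})$ coming from the module structure of the cochain complex coincides with the one induced by its action on the coefficients, the isomorphism above is simultaneously the zero map, so $H^n(\mathfrak{g},M)=0$ for all $n\geq 1$; in particular $H^1(\mathfrak{g},M)=H^2(\mathfrak{g},M)=0$.

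The step I do not regard as routine is the last identification: that the operator on $H^\bullet(\mathfrak{g},M)$ induced by $\rho_M(c)$ equals the one induced by $c$ acting through the $\mathfrak{g}$-module structure of the Chevalley--Eilenberg complex. This is the standard fact that $H^\bullet(\mathfrak{g},M)\cong\mathrm{Ext}^\bullet_{U(\mathfrak{g})}(\mathbb{K},M)$ is a module over the center of $U(\mathfrak{g})$ with the two variable actions agreeing; together with the Cartan null-homotopy it is precisely what forces the cohomology to vanish, and it is worked out in \cite{r}. One should also note, for the trivial-coefficient case, that $H^2(\mathfrak{g},\mathbb{K})=0$ relies on Levi's decomposition theorem, which is to be invoked from an independent source and does not depend on the present lemma.
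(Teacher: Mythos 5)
Your proof is correct, but note that the paper does not prove this lemma at all: it is stated as a well-known fact with a pointer to \cite{r}, so there is no argument of the authors' to compare against. What you have written is the standard Casimir-element proof, and all the steps check out: the reduction to irreducible coefficients by the long exact sequence, the nondegeneracy of the trace form on $\mathfrak{g}_1=\mathfrak{g}/\ker\rho_M$ via Cartan's criterion, the invertibility of $\rho_M(c)$ from $\operatorname{tr}\rho_M(c)=\dim\mathfrak{g}_1\neq 0$, and the null-homotopy $\sum_i\theta(e_i)\theta(f_i)=\delta h+h\delta$. The two points you flag as non-routine are exactly the right ones. For the identification of the two actions of the central element $c$ on $H^\bullet(\mathfrak{g},M)\cong\operatorname{Ext}^\bullet_{U(\mathfrak{g})}(\mathbb{K},M)$, the cleanest packaging is that $c$ acts on $\operatorname{Ext}^\bullet(\mathbb{K},M)$ both through $\mathbb{K}$ (where it acts by the augmentation, i.e.\ by $0$) and through $M$ (where it acts invertibly), and these agree; this makes the Cartan homotopy strictly speaking unnecessary. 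For the trivial-coefficient case of $H^2$, your appeal to Levi's theorem is legitimate but has a whiff of circularity, since the hard (central-radical) case of Levi is essentially the statement $H^2(\mathfrak{g},\mathbb{K})=0$; you can avoid it entirely by observing that a central extension $0\to\mathbb{K}\to\mathfrak{e}\to\mathfrak{g}\to 0$ is in particular an extension of $\mathfrak{g}$-modules classified by $H^1(\mathfrak{g},\mathfrak{g}^{\ast})$, which vanishes by your own Casimir argument because $\mathfrak{g}^{\ast}\cong\mathfrak{g}$ has no trivial constituents, and a $\mathfrak{g}$-module splitting of a central extension is automatically a Lie algebra splitting. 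Incidentally, the only case the paper actually uses is $M=\mathbb{R}$ trivial (to write $\Omega=-da$), i.e.\ precisely the case you settle by this extension argument rather than by the Casimir.
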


\noindent
It follows that

$$\Omega =-da \ \ a\in \mathfrak{g}^{\ast}.$$

\noindent
For any Lie algebra $\mathfrak{t}$ we have $b\in \mathfrak{t} \Rightarrow db(X,Y)=-b([X,Y])$ so:

$$-i(h)\Omega(X)=0 \Rightarrow a([\mathfrak{h} , \mathfrak{m}]) =a([\mathfrak{h} , \mathfrak{h}]) =0.$$

\noindent
Since the Killing form $B$ is nondegenerate, the 1-form $a$ is dual to some element $Z\in \mathfrak{g}$ with respect to $B$ by the formula

$$B(Z,\bullet )=a(\bullet )$$
$$\forall_{X,Y\in \mathfrak{g}} \ \Omega (X,Y)= B(Z,[X,Y]).$$

\noindent
The latter implies the following result.

\begin{theorem}\label{thm:sympl-descr}
A semisimple $k$-symmetric pair $(\mathfrak{g},\nu)$ is symplectic if and only if there is $Z\in Z(\mathfrak{h})$ such that $ker(ad_{Z}|_{\mathfrak{m}})={0}.$
\end{theorem}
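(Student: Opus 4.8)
The plan is to leverage the identity $\Omega(X,Y)=B(Z,[X,Y])$ already derived in the discussion preceding the theorem: for the ``only if'' direction the remaining tasks are to locate $Z$ inside $Z(\mathfrak{h})$ and to rephrase nondegeneracy of $\Omega$ as injectivity of $ad_{Z}|_{\mathfrak{m}}$, and the converse is obtained by reversing these steps.

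For ``only if'', let $(\mathfrak{g},\nu,\Omega)$ be a semisimple symplectic triple. By Lemma~\ref{lemma:whitehead} the Chevalley cocycle $\underline{\Omega}$ is exact, $\underline{\Omega}=-da$, and nondegeneracy of $B$ produces $Z\in\mathfrak{g}$ with $a=B(Z,\bullet)$ and $\Omega(X,Y)=B(Z,[X,Y])$. First I would show $Z\in\mathfrak{h}$: since $\underline{\Omega}$ is $\nu$-invariant and $\mathfrak{g}=[\mathfrak{g},\mathfrak{g}]$, the functional $a$ is $\nu$-invariant, hence (using $\nu$-invariance of $B$) $B(\nu^{-1}Z-Z,\bullet)=0$, so $\nu Z=Z$, i.e. $Z\in\ker(\mathrm{id}-\nu)=\mathfrak{h}$. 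Next, $i(h)\underline{\Omega}=0$ for $h\in\mathfrak{h}$ gives $a([\mathfrak{h},\mathfrak{g}])=0$, that is $B([Z,h],x)=B(Z,[h,x])=0$ for all $h\in\mathfrak{h}$, $x\in\mathfrak{g}$, so $[Z,\mathfrak{h}]=0$ and $Z\in Z(\mathfrak{h})$. Finally, for $X,Y\in\mathfrak{m}$, $\Omega(X,Y)=B(Z,[X,Y])=B([Z,X],Y)$ with $[Z,X]\in\mathfrak{m}$ (as $Z\in\mathfrak{h}$ and $[\mathfrak{h},\mathfrak{m}]\subset\mathfrak{m}$); since $B$ restricts nondegenerately to $\mathfrak{m}$ (true for any semisimple $k$-symmetric pair, by the $B$-orthogonality of $\mathfrak{h}$ and $\mathfrak{m}$), $ad_{Z}X=0$ would make $\Omega(X,\cdot)$ vanish on $\mathfrak{m}$, forcing $X=0$; thus $\ker(ad_{Z}|_{\mathfrak{m}})=\{0\}$.

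For the converse, given $Z\in Z(\mathfrak{h})$ with $\ker(ad_{Z}|_{\mathfrak{m}})=\{0\}$, put $a:=B(Z,\bullet)$, $\underline{\Omega}:=-da$, so $\underline{\Omega}(X,Y)=B(Z,[X,Y])$ and $d\underline{\Omega}=0$; also $Z\in\mathfrak{h}$ gives $\nu Z=Z$. As $[Z,\mathfrak{h}]=0$ and $B$ is invariant, $B(Z,[X,Y])=B([Z,X],Y)=0$ whenever $X\in\mathfrak{h}$, so $\underline{\Omega}$ is supported on $\mathfrak{m}\times\mathfrak{m}$; set $\Omega:=\underline{\Omega}|_{\mathfrak{m}\times\mathfrak{m}}$. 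It then remains to check the symplectic-triple axioms: nondegeneracy of $\Omega$ reverses the last step above; $ad_{\mathfrak{h}}$-invariance follows from Jacobi, $\Omega([h,X],Y)+\Omega(X,[h,Y])=B(Z,[h,[X,Y]])=B([Z,h],[X,Y])=0$; and $\nu|_{\mathfrak{m}}$ is a symplectomorphism since $\Omega(\nu X,\nu Y)=B(Z,\nu[X,Y])=B(\nu^{-1}Z,[X,Y])=\Omega(X,Y)$; closedness of $\omega$ is automatic because $\underline{\Omega}=-da$. Hence $(\mathfrak{g},\nu,\Omega)$ is a symplectic triple (prime exactly when the pair $(\mathfrak{g},\nu)$ is).

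I expect the crux to be the localization of $Z$ in $Z(\mathfrak{h})$. The inclusion $Z\in\mathfrak{h}$ leans essentially on the $\nu$-invariance of $\Omega$ --- for $k=2$ one can argue directly with $\sigma$, but for general $k$ it is invariance under the order-$k$ automorphism that pins $Z$ down --- while $[Z,\mathfrak{h}]=0$ together with the $B$-orthogonality $\mathfrak{h}\perp\mathfrak{m}$ is exactly what makes the extension-by-zero of $\Omega$ compatible with the coboundary $-da$ and underlies the equivalence of nondegeneracy with injectivity of $ad_{Z}|_{\mathfrak{m}}$. The remaining identities are routine Jacobi-and-invariance computations.
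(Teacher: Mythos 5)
Your proof is correct, and its skeleton --- Whitehead's lemma giving $\Omega(X,Y)=B(Z,[X,Y])$, then locating $Z$ in $Z(\mathfrak{h})$ and translating nondegeneracy of $\Omega$ into injectivity of $ad_{Z}|_{\mathfrak{m}}$, with the converse obtained by reversing these computations --- is the same as the paper's. The one genuinely different step is how you kill the $\mathfrak{m}$-component of $Z$. You invoke the $\nu$-invariance of $\Omega$ (an axiom of the symplectic triple): since $\mathfrak{g}=[\mathfrak{g},\mathfrak{g}]$, the primitive $a=B(Z,\bullet)$ is itself $\nu$-invariant, whence $\nu Z=Z$ and $Z\in\ker(id-\nu)=\mathfrak{h}$. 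The paper instead writes $Z=h+m$ and argues purely from $i(\mathfrak{h})\underline{\Omega}=0$ and nondegeneracy of $B$ and $\Omega$: first $B([m,h_{1}],m_{1})=0$ for all $h_1\in\mathfrak{h}$, $m_1\in\mathfrak{m}$ forces $[\mathfrak{h},m]=0$; then, were $m\neq 0$, nondegeneracy of $\Omega$ on $\mathfrak{m}$ would produce $m_{2}$ with $0\neq B(Z,[m,m_{2}])=B([h,m]+[m,m],m_{2})=0$, so $m=0$. Your route is shorter and perfectly valid for the theorem as stated; the paper's buys slightly more, namely that any $ad_{\mathfrak{h}}$-invariant symplectic form on $\mathfrak{m}$ of a semisimple pair is automatically of the form $B(Z,[\cdot,\cdot])$ with $Z\in Z(\mathfrak{h})$, so that its $\nu$-invariance need not be assumed --- it comes for free, as the computation $B(Z,[\nu X,\nu Y])=B(\nu Z,\nu[X,Y])=B(Z,[X,Y])$ at the end of the paper's proof shows. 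The remaining steps (deducing $[Z,\mathfrak{h}]=0$ from $i(h)\underline{\Omega}=0$ and nondegeneracy of $B$ on $\mathfrak{h}$, and the whole converse direction) coincide with the paper's.
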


\begin{proof}
Let $(\mathfrak{g},\nu)$ be symplectic. There is $Z\in \mathfrak{g}$ such that:
$$\Omega (X,Y)=B(Z, [X,Y])$$
Let $Z=h+m,$ we have:
$$\forall_{h_{1}\in \mathfrak{h}} \ \forall_{m_{1}\in \mathfrak{m}} \ B(Z,[h_{1},m_{1}])=0$$
$$B([h+m,h_{1}],m_{1})=0$$
but $\mathfrak{h}$ is a subalgebra, also $\mathfrak{m}$ and $\mathfrak{h}$ are $B$ - orthogonal so:
$$\forall_{h_{1}\in \mathfrak{h}} \ \forall_{m_{1}\in \mathfrak{m}} \ B([m,h_{1}],m_{1})=0.$$
The Killing form is nondegenerate on $\mathfrak{m}$ and $[\mathfrak{h},\mathfrak{m}] \subset \mathfrak{m}$ therefore:
$$\forall_{h_{1} \in \mathfrak{h}} \ [h_{1},m]=0.$$
Assume, that $m \neq 0.$ The symplectic form is nondegenerate on $\mathfrak{m}$:
$$\exists_{m_{2} \in \mathfrak{m}} \ B(Z,[m,m_{2}]) \neq 0$$
$$B([h+m,m],m_{2}) \neq 0$$
$$B([h,m]+[m,m],m_{2}) \neq 0$$
$$B(0,m_{2}) \neq 0,$$
a contradiction, which implies $Z=h.$ Because $i(h)\Omega(X)=0$ we get:
$$\forall_{h_{1},h_{2}\in \mathfrak{h}} \ B(h,[h_{1},h_{2}])=0$$
$$B([h,h_{1}],h_{2})=0,$$
and because $B$ is nondegenerate on $\mathfrak{h}$ we obtain:
$$\forall_{h_{1}\in \mathfrak{h}} \ [h,h_{1}]=0,$$
which implies $Z\in Z(\mathfrak{h}).$

Conversely,  define a 2-form $\Omega$ by
$$\Omega (X,Y):=B(Z,[X,Y])=B([Z,X],Y).$$
It is nondegenerate on $\mathfrak{m}$ because $B$ is nondegenerate on $\mathfrak{m}$ and $i(W) \Omega =0$ for $W \in \mathfrak{h}$. Furthermore $\Omega$ is closed by  Lemma \ref{lemma:whitehead}. Now it is sufficient to notice, that $\nu (Z)=Z$. The latter follows from the equalities below.
$$B(Z,[\nu (X), \nu (Y)])=B(Z,\nu [X,Y])=B(\nu (Z),\nu [X,Y])=B(Z,[X,Y]).$$
\end{proof}

\noindent
In view of the latter theorem, it will be convenient to use the following terminology.

\begin{definition}
An element $Z$ such that $Z\in Z(\mathfrak{h})$ and $ker(ad_{Z}|_{\mathfrak{m}})={0}$ will be called injective.
\end{definition}
Now Theorem \ref{thm:sympl-descr} can be fomulated as follows.
{\sl A semisimple $k$-symmetric pair $(\mathfrak{g},\nu)$ is symplectic if and only if it has an injective element}.

\section{Classification of $3$-symmetric symplectic spaces}

\subsection{Classification of 3-symmetric spaces}

 In \cite{g1} Gray and Wolf presented (inter alia) a classification of simple, simply connected 3-symmetric manifolds $X=G/H,$ where group $G$ acts effectively on $X.$ Compact and noncompact spaces are treated separately. In the first case the classification is divided into the following three subcases :
\begin{enumerate}
\item[C1:] $G$ is a simple Lie group, $H$ is a centralizer of a torus, $rank \ G \ = \ rank \ H$
\item[C2:] $G$ is a simple Lie group, $H$ is  semisimple with center $\mathbb{Z}_{3}$, $rank \ G \ = \ rank \ H$
\item[C3:] $rank \ G \ > \ rank \ H$
\end{enumerate}
At the second step of their analysis, they use the above list and  study the noncompact case by a certain construction which we are going to describe now. Here are the basic steps of it.
\\
{\bf \em Step 1.} Let $G$ be a reductive non-compact Lie group with Lie algebra $\mathfrak{g}$ and let $H$ be a closed and reductive subgroup of $G$ with the Lie algerba $\mathfrak{h}.$ Assume that $G$ acts effectively on $X=G/H.$ There exists a Cartan involution $\sigma$ of $\mathfrak{g}$ which preserves $\mathfrak{h}$ and we can decompose $\mathfrak{g}$ into $(+1)1$- and $(-1)$-eigenspaces of $\sigma$ (this is the Cartan decomposition):

$$\mathfrak{g}=\mathfrak{g}^{\sigma}+\mathfrak{m} \ \ \ \ \mathfrak{h}=\mathfrak{h}^{\sigma}+(\mathfrak{h}\cap \mathfrak{m}).$$

We obtain compact real forms of $\mathfrak{g}^{\mathbb{C}}$ and $\mathfrak{h}^{\mathbb{C}}:$

$$\mathfrak{g}_{u}=\mathfrak{g}^{\sigma}+\sqrt{-1}\mathfrak{m} \ \ \ \ \mathfrak{h}_{u}=\mathfrak{h}^{\sigma}+\sqrt{-1}(\mathfrak{h}\cap \mathfrak{m})$$

\noindent
Then the following result holds.

\begin{lemma}[Lemma 7.4 in \cite{g1}]
There is a unique choice of Lie group $G_{u}$ with Lie algebra $\mathfrak{g}_{u}$ which has the properties ($Z_{u}$ denotes the identity component of the center of $G_{u}$):
\begin{enumerate}
\item the analytic subgroup $H_{u}$ for $\mathfrak{h}_{u}$ is closed,
\item the action of $G_{u}$ on $X_{u}=G_{u}/H_{u}$ is effective,
\item $X_{u}^{'}=G_{u}/Z_{u}H_{u}$ is simply connected, the natural projection $X_{u} \rightarrow X_{u}^{'}$ is a principal torus bundle with group $Z_{u},$ and $\pi_{1}(X_{u})\cong \pi_{1}(Z_{u}),$ free abelian of rank $dimZ_{u}.$
\end{enumerate}
\label{aa2}
\end{lemma}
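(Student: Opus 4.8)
The plan is to construct $G_u$ as an appropriate quotient of the universal covering group associated with the compact form $\mathfrak g_u$, to single out the correct quotient using conditions (1)--(3), and then to deduce the topological statements from the homotopy exact sequences of the fibre bundles involved.

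First I would record the algebraic consequences of effectiveness: since $G$ acts effectively on $G/H$, the subalgebra $\mathfrak h$ contains no nonzero ideal of $\mathfrak g$, and because $\mathfrak g_u,\mathfrak h_u$ are real forms of $\mathfrak g^{\mathbb C},\mathfrak h^{\mathbb C}$ compatible with the relevant conjugations, the pair $(\mathfrak g_u,\mathfrak h_u)$ inherits this property; in particular $\mathfrak h_u\cap\mathfrak z(\mathfrak g_u)=0$. Write $\mathfrak g_u=\mathfrak z_u\oplus\mathfrak s_u$ with $\mathfrak z_u=\mathfrak z(\mathfrak g_u)$ abelian and $\mathfrak s_u=[\mathfrak g_u,\mathfrak g_u]$ compact semisimple, and let $\widetilde S_u$ be the simply connected group with Lie algebra $\mathfrak s_u$, which is compact by Weyl's theorem. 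Then $\widetilde G_u:=\widetilde S_u\times\mathfrak z_u$ is the simply connected group with Lie algebra $\mathfrak g_u$; write $\widetilde H_u$ for the analytic subgroup for $\mathfrak h_u$. Every connected group with Lie algebra $\mathfrak g_u$ is $\widetilde G_u/D$ for a discrete central $D\subseteq Z(\widetilde S_u)\times\mathfrak z_u$, and it is compact exactly when $\Gamma:=D\cap\mathfrak z_u$ is a full lattice of $\mathfrak z_u$; then $Z_u=\mathfrak z_u/\Gamma$ is a torus of dimension $\dim\mathfrak z_u$.

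The heart of the proof is the choice of $D$. It must be taken so that the image $H_u$ of $\widetilde H_u$ in $G_u:=\widetilde G_u/D$ is closed while $G_u$ still acts effectively on $X_u=G_u/H_u$, and so that $D$ is no larger than these conditions require. Here one uses that $\mathfrak h_u$ is reductive -- so that its analytic subgroup does not ``wind irrationally'' once a lattice $\Gamma$ adapted to the toral part of $\mathfrak h_u$ and to the semisimple directions is built into $D$ -- together with the fact that $\mathfrak h_u$ contains no nonzero ideal of $\mathfrak g_u$, which, by the minimality of $D$, gives $\bigcap_{g\in G_u}gH_ug^{-1}=\{e\}$, i.e., effectiveness. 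Once $D$ is fixed, $H_u=\widetilde H_u/(D\cap\widetilde H_u)$ is a connected subgroup with Lie algebra $\mathfrak h_u$, hence the analytic subgroup, and it is closed by construction.

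Finally I would verify the topological assertions and uniqueness. Since $\mathfrak h_u\cap\mathfrak z_u=0$, the intersection $Z_u\cap H_u$ is a closed subgroup of the torus $Z_u$ with trivial Lie algebra, hence finite, so $Z_uH_u/H_u\cong Z_u/(Z_u\cap H_u)$ is again a torus of dimension $\dim\mathfrak z_u$; as $Z_u$ is central, right translation exhibits $X_u\to X_u'=G_u/Z_uH_u$ as a principal torus bundle with structure group $Z_u/(Z_u\cap H_u)$. To see that $X_u'$ is simply connected, note that $G_u/Z_u$ is a quotient of the simply connected group $\widetilde S_u$ and that $Z_uH_u$ is connected, so the fibration $Z_uH_u/Z_u\to G_u/Z_u\to X_u'$ gives $\pi_1(X_u')=0$. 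Combining the homotopy sequence $\pi_2(X_u')\to\pi_1(Z_u)\to\pi_1(X_u)\to\pi_1(X_u')=1$ with the computation $\pi_1(X_u)\cong D/(D\cap\widetilde H_u)$ coming from the covering $\widetilde G_u\to G_u$ (valid because $\widetilde G_u$ is simply connected) then identifies $\pi_1(X_u)$ with $\pi_1(Z_u)$, free abelian of rank $\dim Z_u$. For uniqueness: compactness forces $\Gamma=D\cap\mathfrak z_u$ to be a full lattice, closedness of $H_u$ forces $D$ to be large enough, and simple-connectedness of $X_u'$ together with the $\pi_1$-isomorphism forbid $D$ from being any larger, so $D$, and hence $G_u$, is determined up to isomorphism. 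The main obstacle is the first of these steps: exhibiting a $D$ that makes $H_u$ closed without destroying effectiveness or the fundamental-group matching -- since the closedness and the $\pi_1$ requirements pull against one another -- and this is precisely where the reductivity of $H$, and the extra structure it carries in the situation at hand, enter essentially.
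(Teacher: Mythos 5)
This statement is Lemma~7.4 of Gray--Wolf, which the paper imports without proof, so there is no internal argument to compare yours against; I can only assess your proposal on its own terms. As a proof it has a genuine gap at exactly the point you yourself flag as ``the main obstacle'': the discrete central subgroup $D\subseteq Z(\widetilde S_u)\times\mathfrak z_u$ is never actually constructed. Everything in the lemma --- existence, closedness of $H_u$, effectiveness, and uniqueness --- hinges on exhibiting a specific $D$ and checking the three properties for it, and your text replaces this with the assertion that a lattice ``adapted to the toral part of $\mathfrak h_u$ and to the semisimple directions'' can be ``built into $D$''. Likewise the uniqueness argument is only a heuristic squeeze (``$D$ must be large enough'' versus ``$D$ cannot be larger'') with neither bound made precise. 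A plan that defers its central construction is not yet a proof.

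Two further steps would fail, or at least do not follow, as written. First, the lemma asserts that $X_u\to X_u'$ is a principal bundle with group $Z_u$ itself, whereas your argument only produces structure group $Z_u/(Z_u\cap H_u)$ with $Z_u\cap H_u$ finite; triviality of $Z_u\cap H_u$ is precisely one of the constraints the correct choice of $D$ must enforce, and you do not establish it. Second, the isomorphism $\pi_1(X_u)\cong\pi_1(Z_u)$ does not come for free from the exact sequence $\pi_2(X_u')\to\pi_1(Z_u)\to\pi_1(X_u)\to\pi_1(X_u')=1$: injectivity on the left requires the connecting homomorphism $\pi_2(X_u')\to\pi_1(Z_u)$ to vanish, and for a principal torus bundle this map is essentially the Euler (Chern) class, which is typically nonzero --- indeed $X_u'$ is a generalized flag manifold with large $\pi_2$. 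So the $\pi_1$ identification must be derived from the explicit description of $D$ (your computation $\pi_1(X_u)\cong D/(D\cap\widetilde H_u)$ is the right tool), not from the homotopy sequence alone. The overall strategy --- quotients of $\widetilde S_u\times\mathfrak z_u$ by central discrete subgroups, with $\mathfrak h_u\cap\mathfrak z(\mathfrak g_u)=0$ extracted from effectiveness --- is the standard and correct framework, but the proof is not complete until $D$ is produced and these two points are settled.
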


{\bf \em Step 2.} Let $X=G/H$ be a coset space of compact and connected Lie groups $G$ and $H$ with Lie algebras $\mathfrak{g}$ and $\mathfrak{h}.$ Let $G$ act effectively on $X$ and let $\sigma$ be an involutive automorphism of $\mathfrak{g}$ which preserves $\mathfrak{h}.$ In the compact case such automorphism always exists, since a compact Lie group contains a circle subgroup, say $S$. Thus, one can take an inner automorphism generated by $s\in S\subset H$ of order $2$. We have the following decomposition into $(+1)$- and $(-1)$-eigenspaces of $\sigma$:

$$\mathfrak{g}=\mathfrak{g}^{\sigma}+\mathfrak{m} \ \ \ \ \mathfrak{h}=\mathfrak{h}^{\sigma}+(\mathfrak{h}\cap \mathfrak{m}).$$

\noindent
This decomposition defines real forms of $\mathfrak{g}^{\mathbb{C}}$ and $\mathfrak{h}^{\mathbb{C}}:$

\begin{equation}
\mathfrak{g}^{\ast}=\mathfrak{g}^{\sigma}+\sqrt{-1}\mathfrak{m} \ \ \ \ \mathfrak{h}^{\ast}=\mathfrak{h}^{\sigma}+\sqrt{-1}(\mathfrak{h}\cap \mathfrak{m}).
\label{eq3}
\end{equation}

\noindent
Furthermore, $\mathfrak{g}^{\ast}$ is reductive, $\mathfrak{h}^{\ast}$ is reductive in $\mathfrak{g}^{\ast},$ and the following result holds.

\begin{lemma}[Lemma 7.5 in \cite{g1}]
There is a unique simply connected coset space $G^{\ast}/H^{\ast}$, where $G^{\ast}$ is a connected Lie group with the Lie algebra $\mathfrak{g}^{\ast},$ $H^*$ is a closed subgroup of $G$ with the Lie algebra $\mathfrak{h}^{\ast},$ and $G^{\ast}$ acts effectively on $X^{\ast}.$
Let $F$ be a torsion subgroup of $\pi_{1}(X).$ Then $F$ can be viewed as a finite central subgroup of $G^{\ast}_{u}$ (cf. Step 1) such that $G=G^{\ast}_{u}/F,$ $H=(H^{\ast}_{u}F)/F$ and $X=X^{\ast}_{u}/F.$
\label{aa1}
\end{lemma}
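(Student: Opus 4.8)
The plan is to establish the two assertions in turn: first the existence and uniqueness of the simply connected coset space $G^{\ast}/H^{\ast}$ attached to the real forms $\mathfrak{g}^{\ast},\mathfrak{h}^{\ast}$, and then the description of the original data $(G,H,X)$ as a quotient of the compact ``companion'' $(G_u^{\ast},H_u^{\ast},X_u^{\ast})$ produced by the Step 1 construction (Lemma \ref{aa2}) applied to $\mathfrak{g}^{\ast}$.

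For existence I would first transport effectiveness to the dual side. Since $G$ acts effectively on $X$, the algebra $\mathfrak{h}$ contains no nonzero ideal of $\mathfrak{g}$; passing to the complexification and using that $\mathfrak{h}^{\mathbb{C}}$ and the ideals of $\mathfrak{g}^{\mathbb{C}}$ are all defined over $\mathbb{R}$ with respect to the real structure of the compact algebra $\mathfrak{g}$, one gets that $\mathfrak{h}^{\mathbb{C}}$, hence $\mathfrak{h}^{\ast}=\mathfrak{h}^{\mathbb{C}}\cap\mathfrak{g}^{\ast}$, contains no nonzero ideal of $\mathfrak{g}^{\ast}$. Now let $\widetilde{G^{\ast}}$ be the simply connected Lie group with Lie algebra $\mathfrak{g}^{\ast}$ and $\widetilde{H}_0$ the analytic subgroup with Lie algebra $\mathfrak{h}^{\ast}$. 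One checks that $\widetilde{H}_0$ is closed using reductivity of $\mathfrak{h}^{\ast}$ in $\mathfrak{g}^{\ast}$ (for instance by realising $\widetilde{G^{\ast}}$ over the simply connected complex group with Lie algebra $\mathfrak{g}^{\mathbb{C}}$, in which the complex analytic subgroup of the reductive subalgebra $\mathfrak{h}^{\mathbb{C}}$ is closed, and taking the identity component of the preimage). Then $\widetilde{G^{\ast}}/\widetilde{H}_0$ is simply connected, since in the homotopy exact sequence of $\widetilde{H}_0\to\widetilde{G^{\ast}}\to\widetilde{G^{\ast}}/\widetilde{H}_0$ one has $\pi_1(\widetilde{G^{\ast}})=0$ and $\pi_0(\widetilde{H}_0)=0$. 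Finally let $N$ be the kernel of the $\widetilde{G^{\ast}}$-action on $\widetilde{G^{\ast}}/\widetilde{H}_0$, i.e.\ the largest normal subgroup of $\widetilde{G^{\ast}}$ contained in $\widetilde{H}_0$; its Lie algebra is the largest ideal of $\mathfrak{g}^{\ast}$ contained in $\mathfrak{h}^{\ast}$, which is $0$ by the above, so $N$ is discrete. Put $G^{\ast}:=\widetilde{G^{\ast}}/N$ and $H^{\ast}:=\widetilde{H}_0/N$: then $G^{\ast}$ has Lie algebra $\mathfrak{g}^{\ast}$, $H^{\ast}$ is closed with Lie algebra $\mathfrak{h}^{\ast}$, $G^{\ast}$ acts effectively on $X^{\ast}:=G^{\ast}/H^{\ast}=\widetilde{G^{\ast}}/\widetilde{H}_0$, and $X^{\ast}$ is simply connected. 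Uniqueness follows the same lines: any admissible pair $(G',H')$ has universal cover $\widetilde{G^{\ast}}$, simple connectedness of $G'/H'$ forces the identity component of the preimage of $H'$ to be all of it and equal to $\widetilde{H}_0$ (again by the homotopy sequence), and effectiveness then forces $G'=\widetilde{G^{\ast}}/N$.

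For the second part I would apply Lemma \ref{aa2} to $\mathfrak{g}^{\ast}$. Here the relevant Cartan involution of $\mathfrak{g}^{\ast}$ is the given $\sigma$: it is Cartan because the Killing form of the compact algebra $\mathfrak{g}$ is negative semidefinite, hence $\le 0$ on $\mathfrak{g}^{\sigma}$ and $\ge 0$ on $\sqrt{-1}\mathfrak{m}$; and the compact form it produces is $\mathfrak{g}^{\sigma}+\sqrt{-1}\cdot\sqrt{-1}\mathfrak{m}=\mathfrak{g}^{\sigma}+\mathfrak{m}=\mathfrak{g}$, and similarly $\mathfrak{h}_u=\mathfrak{h}$. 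Thus Lemma \ref{aa2} delivers a distinguished compact connected $G_u^{\ast}$ with Lie algebra $\mathfrak{g}$, a closed $H_u^{\ast}$ with Lie algebra $\mathfrak{h}$, an effective action on $X_u^{\ast}=G_u^{\ast}/H_u^{\ast}$, and the torus-bundle data $X_u^{\ast}\to X_u^{\ast}{}'=G_u^{\ast}/Z_u^{\ast}H_u^{\ast}$, a principal $Z_u^{\ast}$-bundle with simply connected base, so that $\pi_1(X_u^{\ast})\cong\pi_1(Z_u^{\ast})$ is free abelian. Now $G$ and $G_u^{\ast}$ are two compact connected groups with the same Lie algebra $\mathfrak{g}$, hence both are quotients of a common universal cover $\widetilde{G}$ by discrete central subgroups $\Gamma_G=\pi_1(G)$ and $\Gamma_u=\pi_1(G_u^{\ast})$. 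Writing $\widetilde{H}$ for the identity component of the preimage of $H$ in $\widetilde{G}$ (a closed subgroup with Lie algebra $\mathfrak{h}$), and using that $H$ and $H_u^{\ast}$ are connected, one gets $X=\widetilde{G}/(\Gamma_G\widetilde{H})$ and $X_u^{\ast}=\widetilde{G}/(\Gamma_u\widetilde{H})$, whence $\pi_1(X)\cong\Gamma_G/(\Gamma_G\cap\widetilde{H})$ and $\pi_1(X_u^{\ast})\cong\Gamma_u/(\Gamma_u\cap\widetilde{H})$. The crucial point, which is exactly what the choice of $G_u^{\ast}$ in Lemma \ref{aa2} secures, is that $\Gamma_u\subseteq\Gamma_G$ and that $\pi_1(X_u^{\ast})$ is the maximal torsion-free quotient of $\pi_1(X)$; granting this, $G=G_u^{\ast}/F$ with $F:=\Gamma_G/\Gamma_u$, and then $H=(H_u^{\ast}F)/F$ and $X=X_u^{\ast}/F$ follow formally, with $F$ identified with the torsion subgroup of $\pi_1(X)$ viewed as a central subgroup of $G_u^{\ast}$.

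The main obstacle is precisely this last identification. One must show that the group $G_u^{\ast}$ singled out by Lemma \ref{aa2} is exactly the one for which $\Gamma_u\subseteq\Gamma_G$ and $\Gamma_G/\Gamma_u$ is the torsion of $\pi_1(X)$ --- equivalently, that passing from $G$ to $G_u^{\ast}$ unwinds precisely the torsion of $\pi_1(X)$, while the free part is accounted for entirely by the principal torus bundle $X_u^{\ast}\to X_u^{\ast}{}'$ (so that the connecting homomorphism $\pi_2(X_u^{\ast}{}')\to\pi_1(Z_u^{\ast})$ vanishes). This requires careful bookkeeping of the fundamental groups of $G$, $H$, $X$, their universal covers and the relevant central tori, together with the uniqueness clause of Lemma \ref{aa2}; it is the technical core of Lemma 7.5 of \cite{g1}.
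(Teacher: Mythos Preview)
The paper does not give its own proof of this lemma: it is quoted verbatim as ``Lemma 7.5 in \cite{g1}'' (Gray--Wolf) and used as a black box in the classification of noncompact $3$-symmetric spaces. So there is nothing in the present paper to compare your argument against.

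That said, your outline is broadly the right shape for how such a result is established. The first part (existence and uniqueness of the effective simply connected model $G^{\ast}/H^{\ast}$) is standard and your sketch is adequate. For the second part, your key observation---that applying the Step~1 construction (Lemma~\ref{aa2}) to $\mathfrak{g}^{\ast}$ with the Cartan involution $\sigma$ returns exactly the original compact pair $(\mathfrak{g},\mathfrak{h})$---is correct and is indeed the pivot of the argument. You are also honest about where the real content lies: the identification of $F$ with the torsion subgroup of $\pi_1(X)$ and the inclusion $\Gamma_u\subseteq\Gamma_G$ require the specific characterization of $G_u^{\ast}$ given by the uniqueness clause of Lemma~\ref{aa2}, together with a careful comparison of the long exact homotopy sequences for the two fibrations. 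As written, your proposal is an outline rather than a proof: you state what must be shown (that passing from $G$ to $G_u^{\ast}$ unwinds precisely the torsion of $\pi_1(X)$, with the free part absorbed by the torus bundle $X_u^{\ast}\to X_u^{\ast}{}'$) but defer the verification to \cite{g1}. If you want a self-contained proof you would need to actually carry out that bookkeeping; otherwise, citing \cite{g1} as the paper does is the appropriate course.
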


{\bf \em Step 3.} Let $(\mathfrak{g}^{\ast}, \nu)$ be a noncompact prime 3-symmetric pair and assume that $\mathfrak{g}^{\ast}$ is semisimple. We will show, that it can be obtained from some compact $3$-symmetric space by the procedure from Lemma \ref{aa2}.
This pair defines a $3$-symmetric space $X^{\ast}=G^{\ast}/H^{\ast}.$ First, we will assign a compact $3$-symmetric space $X$ to $X^{\ast}$.
Extend $\nu$ by linearity to an automorphism of $(\mathfrak{g}^{\ast})^{\mathbb{C}}$ and let $L$ be a maximal compact subgroup of $Int((\mathfrak{g}^{\ast})^{\mathbb{C}})$ containing $\nu.$ It is known that for any complex semisimple Lie algebra $\mathfrak{a}$, any maximal compact subgroup of a Lie group $A=Int(\frak{a})$ corresponds to a compact real form of $\frak{a}$. Because of that, we may conclude that $L$ defines a compact real form $\mathfrak{g}$ of $\mathfrak{g}^{\ast \mathbb{C}}$ by  $exp(ad\mathfrak{g})$ which is the identity component $L_{0}$ of $L.$ We have $\nu(\mathfrak{g})=\mathfrak{g},$ so let $\mathfrak{h} = \mathfrak{g}^{\nu}$ and obtain a simply connected homogeneous space $X=G/H$  with connected $G$  acting effectively on $X$. 
Using lemma \ref{aa2} we also have a correspondence between 
$X^{\ast}=G^{\ast}/H^{\ast}$ and  $ X^{\ast}_{u}=G^{\ast}_{u}/K^{\ast}_{u}.$
As $\mathfrak{g}$ and $\mathfrak{g}^{\ast}_{u}$ are compact real forms, there is an automorphism $\alpha$ of $(\mathfrak{g}^{\ast})^{\mathbb{C}}$ sending $\mathfrak{g}$ to $\mathfrak{g}_{u}^{\ast}.$ Furthermore $\mathfrak{h}$ and $\mathfrak{h}_{u}^{\ast}$ are compact real forms of $(\mathfrak{g}^{\ast})^{\mathbb{C}})^{\nu}$ so we have $\alpha(\mathfrak{h})\cong \mathfrak{h} \cong \mathfrak{h}_{u}^{\ast},$ thus there is also an automorphism $\beta$ of $\alpha(\mathfrak{g}) =\mathfrak{g}^{\ast}_{u}$ which sends $\alpha (\mathfrak{h})$ to $\mathfrak{h}_{u}^{\ast}.$ Therefore $\beta \alpha : \mathfrak{g} \cong  \mathfrak{g}_{u}^{\ast}$ sends $\mathfrak{h}$ to $\mathfrak{h}_{u}^{\ast},$ and so induces an isomorphism from $X$ to $X_{u}^{\ast}.$ As a result we can view $X^{\ast}=G^{\ast}/H^{\ast}$ as a space constructed from $X=G/H$ by the procedure described in Lemma \ref{aa1}, provided that we view $\mathfrak{h}:=\mathfrak{g}^{\phi},$ where an automorphism of order $3$ is defined as $\phi =\beta \circ \alpha \circ \nu \circ \alpha^{-1} \circ \beta^{-1}.$ 
We can summarize these observations as follows. We will say that a $3$-symmetric space of non-compact type $G^*/H^*$ is obtained from a $3$-symmetric space of compact type {\it by a canonical procedure}, if $G^*/H^*$ corresponds to $G/H$ in accordance with Lemma \ref{aa1}.  

\begin{theorem}\label{thm:correspondence} Any simple $3$-symmetric  space $G^*/H^*$ of non-compact type is obtained from some compact $3$-symmetric simple space by a canonical procedure.
\end{theorem}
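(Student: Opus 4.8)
The plan is to turn the construction already carried out in Step 3 into a proof. First I would start from the $3$-symmetric pair $(\mathfrak{g}^*,\nu)$ attached to the given simple non-compact space $X^*=G^*/H^*$; it is automatically prime by Proposition \ref{psp}. Extending $\nu$ complex-linearly to $(\mathfrak{g}^*)^{\mathbb{C}}$, the finite cyclic group $\langle\nu\rangle$ is a compact subgroup of $\mathrm{Int}((\mathfrak{g}^*)^{\mathbb{C}})$, hence sits inside some maximal compact subgroup $L$; its identity component $L_0=\exp(\mathrm{ad}\,\mathfrak{g})$ singles out a compact real form $\mathfrak{g}$ of $(\mathfrak{g}^*)^{\mathbb{C}}$, and since $\nu$ normalizes $L_0$ we get $\nu(\mathfrak{g})=\mathfrak{g}$. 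I would then check that $(\mathfrak{g},\nu|_{\mathfrak{g}})$ is again a prime $3$-symmetric pair: $\mathfrak{g}$ is compact and simple, so primeness is free by Proposition \ref{psp}, and $\nu|_{\mathfrak{g}}$ has order exactly $3$ because a complex-linear extension cannot lower the order. Setting $\mathfrak{h}=\mathfrak{g}^{\nu}$ and taking $G$ simply connected with $H$ connected (Proposition \ref{ff24}) produces a simply connected compact simple $3$-symmetric space $X=G/H$ with effective $G$.

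Next I would compare $X$ with the compact companion $X_u^*=G_u^*/K_u^*$ of $X^*$ produced by the Step 1 construction (Lemma \ref{aa2}): its Lie algebra $\mathfrak{g}_u^*$ is again a compact real form of $(\mathfrak{g}^*)^{\mathbb{C}}$, with $\mathfrak{h}_u^*$ the induced compact real form of the $\nu$-fixed subalgebra. Since $\mathfrak{g}$ and $\mathfrak{g}_u^*$ are two compact real forms of the same complex algebra, there is an automorphism $\alpha$ of $(\mathfrak{g}^*)^{\mathbb{C}}$ with $\alpha(\mathfrak{g})=\mathfrak{g}_u^*$; then $\alpha(\mathfrak{h})$ and $\mathfrak{h}_u^*$ are isomorphic subalgebras of $\mathfrak{g}_u^*$, and I would use a further automorphism $\beta$ of $\mathfrak{g}_u^*$ to arrange $\beta\alpha(\mathfrak{h})=\mathfrak{h}_u^*$. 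After verifying that $\beta\alpha$ intertwines $\nu|_{\mathfrak{g}}$ with the order-$3$ automorphism $\phi=\beta\alpha\,\nu\,\alpha^{-1}\beta^{-1}$ defining $X_u^*$, this yields an isomorphism of $3$-symmetric spaces $X\cong X_u^*$.

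Finally, $X_u^*$ is \emph{by construction} obtained from $X^*$ through the Step 1 recipe, so reversing that recipe via Step 2 (Lemma \ref{aa1}) exhibits $X^*$ as built from $X_u^*$, and hence — through the isomorphism $X\cong X_u^*$ — from the compact simple $3$-symmetric space $X=G/H$, by the canonical procedure. Here the involution of $\mathfrak{g}$ required in Lemma \ref{aa1} is the transport to $\mathfrak{g}$ of the Cartan involution of $\mathfrak{g}^*$; one should note that it preserves $\mathfrak{h}$ and commutes with $\nu$, so that the $3$-symmetric data survive the construction.

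The step I expect to be the main obstacle is the middle one: passing from ``$\alpha(\mathfrak{h})$ and $\mathfrak{h}_u^*$ are abstractly isomorphic'' to ``they are conjugate inside $\mathfrak{g}_u^*$ by an automorphism compatible with the order-$3$ automorphisms''. This is not a formal fact about arbitrary subalgebras; it rests on the essential uniqueness of a $\nu$-invariant compact real form of a complex semisimple Lie algebra, any two being conjugate by an inner automorphism centralizing $\nu$ — precisely the ``subtle duality'' between $3$-symmetric spaces of compact and non-compact type exploited by Gray and Wolf in \cite{g1}. The remaining verifications — orders of automorphisms, effectiveness, primeness, simple connectivity — are routine once Propositions \ref{psp} and \ref{ff24} are in hand.
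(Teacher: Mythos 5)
Your proposal is correct and follows essentially the same route as the paper's own Step 3: complexify $\nu$, pick a maximal compact subgroup of $Int((\mathfrak{g}^*)^{\mathbb{C}})$ containing it to obtain a $\nu$-invariant compact real form $\mathfrak{g}$, set $\mathfrak{h}=\mathfrak{g}^{\nu}$, and then use the automorphisms $\alpha$ and $\beta$ to identify $X=G/H$ with $X_u^*$ so that Lemma \ref{aa1} applies. The ``main obstacle'' you flag (upgrading the abstract isomorphism $\alpha(\mathfrak{h})\cong\mathfrak{h}_u^*$ to a conjugation compatible with the order-$3$ automorphisms) is exactly the point the paper also passes over quickly, and your proposed fix via the essential uniqueness of $\nu$-invariant compact real forms is the standard Gray--Wolf argument.
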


Now, one only needs to check what spaces can be obtained from { [C1]-[C3]} by the procedure from Lemma \ref{aa1}, and therefore obtain the classification of noncompact simple 3-symmetric spaces $X^{\ast}=G^{\ast}/H^{\ast}$ where $G^{\ast}$ acts effectively:

\begin{enumerate}
\item[NC1]: $G^{\ast}$ is a simple Lie group, $H^{\ast}$ is  a centralizer of a compact toral subgroup. This class consists of $G^{\ast},$ which are obtained from spaces in {Z1} (by the procedure from Lemma \ref{aa1} or by complexification).
\item[NC2]: $G^{\ast}$ is a simple Lie group, $H^{\ast}$ is not a centralizer of a compact toral subgroup, $rank \ G^{\ast} \ = \ rank \ H^{\ast}$. This class consists of $G^{\ast},$ which are obtained from spaces in {C2} (by the procedure from Lemma \ref{aa1} or by complexification)
\item[NC3]: $rank \ G^{\ast} \ > \ rank \ H^{\ast}$
\end{enumerate}

\subsection{Classification of $3$-symmetric symplectic spaces}

We begin with the compact case.

\begin{theorem}[Theorem 9.5 in \cite{g1}]
Let $X=G/H$ where $G$ is a compact Lie group,  $H$ is a centralizer of a torus. Then $X$ is equipped with an invariant K\"ahler structure.
\label{95}
\end{theorem}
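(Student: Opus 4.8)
The plan is to exhibit the invariant Kähler structure directly from the algebraic data attached to the coset space $X = G/H$, where $G$ is compact and $H = Z_G(T)$ for a torus $T \subset H$. First I would set up a Cartan subalgebra and root space decomposition: since $H$ centralizes $T$, the Lie algebra $\mathfrak{h}$ contains a maximal torus $\mathfrak{t}$ of $\mathfrak{g}$ (one uses that in a compact group the centralizer of a torus has the same rank as $G$), so we may pick a common Cartan subalgebra $\mathfrak{t} \subset \mathfrak{h} \subset \mathfrak{g}$. Complexifying, $\mathfrak{g}^{\mathbb C} = \mathfrak{t}^{\mathbb C} \oplus \bigoplus_{\alpha \in R} \mathfrak{g}_\alpha$, and $\mathfrak{h}^{\mathbb C}$ is spanned by $\mathfrak{t}^{\mathbb C}$ together with the root spaces $\mathfrak{g}_\alpha$ for $\alpha$ in a closed symmetric subsystem $R_H \subset R$; correspondingly $\mathfrak{m}^{\mathbb C} = \bigoplus_{\alpha \in R \setminus R_H} \mathfrak{g}_\alpha$.

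Next I would produce the invariant complex structure $J$ on $\mathfrak{m}$. The element $Z \in Z(\mathfrak{h})$ whose centralizer in $\mathfrak g$ is exactly $\mathfrak h$ (available because $H$ is the centralizer of a torus, so $Z(\mathfrak h)$ is a torus of positive dimension and a generic element has centralizer $\mathfrak h$) has the property that $\operatorname{ad}_Z$ acts on each $\mathfrak{g}_\alpha \subset \mathfrak{m}^{\mathbb C}$ by the nonzero scalar $\sqrt{-1}\,\alpha(Z)$. This lets me declare $\mathfrak{m}^{1,0} = \bigoplus_{\alpha(Z)>0}\mathfrak{g}_\alpha$ and $\mathfrak{m}^{0,1}$ its conjugate; since the set $\{\alpha \in R\setminus R_H : \alpha(Z) > 0\}$ is closed under addition (whenever the sum is a root), $\mathfrak{m}^{1,0}$ is a subalgebra of $\mathfrak{g}^{\mathbb C}$ normalized by $\mathfrak h^{\mathbb C}$, which makes the resulting almost complex structure $J$ both $\operatorname{Ad}(H)$-invariant and integrable. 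Then I would take the symplectic form: using Theorem~\ref{thm:sympl-descr} (or directly) set $\Omega(X,Y) = B(Z,[X,Y])$, which is $\operatorname{ad}_{\mathfrak h}$-invariant, closed (it is $\delta$ of a $1$-cochain), and nondegenerate on $\mathfrak m$ because $\operatorname{ad}_Z|_{\mathfrak m}$ is injective; one checks $\Omega$ is of type $(1,1)$ with respect to $J$ since $B$ pairs $\mathfrak g_\alpha$ only with $\mathfrak g_{-\alpha}$, so $\Omega(\mathfrak m^{1,0},\mathfrak m^{1,0}) = 0$. Finally the Hermitian metric $g(X,Y) = \Omega(X,JY)$ is positive definite for a suitable choice of sign of $Z$ (equivalently of the positive system), because on $\mathfrak g_\alpha \oplus \mathfrak g_{-\alpha}$ it reduces to $\alpha(Z)$ times a positive multiple of $-B$, and $-B$ is positive definite on the compact form.

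Assembling, $(J, g, \omega)$ is an invariant Kähler structure on $X = G/H$: $J$ integrable, $\omega$ closed, $g$ positive definite Hermitian, and all three $G$-invariant by construction from $\operatorname{Ad}(H)$-invariant tensors on $\mathfrak m$. I expect the main obstacle to be the integrability of $J$, i.e.\ verifying that the chosen $\alpha$-eigenspaces assemble into an actual subalgebra and that the Nijenhuis tensor vanishes — this is where one must carefully use that $H$ is the centralizer of a torus (so that $Z(\mathfrak h)$ is rich enough and the partition of $R\setminus R_H$ by the sign of $\alpha(Z)$ is compatible with root addition), rather than merely a reductive subgroup. A secondary technical point is keeping track of signs so that the metric comes out positive definite rather than of indefinite signature; this is handled by choosing the positive root system to be the one determined by $Z$.
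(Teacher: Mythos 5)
This statement is imported verbatim from Gray--Wolf (Theorem~9.5 of \cite{g1}); the paper offers no proof of it, so there is nothing internal to compare against. Your argument is the standard root-theoretic construction of the invariant K\"ahler structure on a generalized flag manifold, and it is essentially correct and complete in outline: since $H=Z_G(T)$ has full rank, you get a common Cartan subalgebra, a splitting $R=R_H\sqcup(R\setminus R_H)$ with $\alpha(Z)=0$ exactly for $\alpha\in R_H$ when $Z\in Z(\mathfrak h)$ is chosen with $\mathfrak z_{\mathfrak g}(Z)=\mathfrak h$; the sign of $\alpha(Z)$ then defines $\mathfrak m^{1,0}$, closure of the positive part under root addition gives integrability (equivalently, $\mathfrak h^{\mathbb C}\oplus\mathfrak m^{1,0}$ is parabolic), and $\Omega=B(Z,[\cdot,\cdot])$ is exact as a Chevalley cochain, hence closed, nondegenerate by injectivity of $\operatorname{ad}_Z|_{\mathfrak m}$, and of type $(1,1)$ because $B$ pairs $\mathfrak g_\alpha$ only with $\mathfrak g_{-\alpha}$. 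It is worth noting that this construction dovetails with the machinery the paper actually does develop: your $Z$ is precisely an ``injective element'' in the sense of Theorem~\ref{thm:sympl-descr}, and the compatibility $(\cdot,\cdot)=\omega(\cdot,J\cdot)$ is exactly what the paper later exploits in Case~1 of the noncompact classification. Two small points deserve more care than you give them: roots take purely imaginary values on the Lie algebra of a compact torus, so ``$\alpha(Z)>0$'' must be read as a condition on $-\sqrt{-1}\,\alpha(Z)$ (this is only notational, but it is where sign errors creep in); and the positivity of $g=\Omega(\cdot,J\cdot)$ is not automatic for an arbitrary injective $Z$ --- it holds precisely because you define $J$ from the chamber containing $Z$, so that on each $\mathfrak g_\alpha\oplus\mathfrak g_{-\alpha}$ the metric is $|\alpha(Z)|$ times a positive multiple of $-B$. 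With those caveats your proof stands, and it is in substance the same proof as in the cited source.
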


\noindent
Therefore, all spaces from the class {C1} are symplectic. Classes {C2} and {C3} have no symplectic representatives, because in each simple space the center of the isotropy subgroup is discrete and, therefore, has no injective elements. All spaces from {C1} are presented in table \ref{tab1}.

Now we can analyze the noncompact case. Let $\sigma$ be an involutive automorphism of $\mathfrak{g}$. Let
 $$\mathfrak{g}=\mathfrak{h} \oplus \mathfrak{m}_{A}$$ 
be  the canonical reductive decomposition, determined by some automorphism of order $3$. Note that this time we denote the reductive complement by $\mathfrak{m}_A$, since $\frak{m}$ is reserved for the $(-1)$-eigenspace of the involution $\sigma$.  We can assume that  $\sigma (\mathfrak{h})= \mathfrak{h}.$ Then

$$B(\sigma (\mathfrak{h}),\sigma (\mathfrak{m}_{A}))=B(\mathfrak{h},\mathfrak{m}_{A})=0$$
hence
$$B(\mathfrak{h}, \sigma (\mathfrak{m}_{A}))=0.$$
Because the Killing form $B$ is nondegenerate on $\mathfrak{h},$ we get the equality $\sigma (\mathfrak{m}_{A})=\mathfrak{m}_{A}.$ Therefore the following equalities yield a decomposition into eigenspaces of $\sigma :$
$$\mathfrak{h}= \mathfrak{h}_{1} \oplus \mathfrak{h}_{2} \ \ \mathfrak{m}_{A}= \mathfrak{m}_{1} \oplus \mathfrak{m}_{2}$$
$$\mathfrak{h}^{\ast}= \mathfrak{h}_{1} \oplus i\mathfrak{h}_{2}$$

\noindent
We need the following result.

\begin{lemma}\label{cen1}
Assume that $G/H$ is $3$-symmetric and compact. If $dimZ(\mathfrak{h})=1,$
then any $3$-symmetric homogeneous space $G^*/H^*$ obtained by a canonical procedure, also possesses an injective element (and, therefore, is symplectic).
\end{lemma}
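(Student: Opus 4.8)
The plan is to transport the injective element of $Z(\mathfrak{h})$ through the real-form construction underlying the canonical procedure, correcting it by $\sqrt{-1}$ when necessary. First I would locate an injective element upstairs. Since $\dim Z(\mathfrak{h})=1$, the center of the isotropy is non-discrete, which rules out the classes C2 and C3 (where it is discrete); hence $H$ is the centralizer of a torus, and by Theorem \ref{95} the space $G/H$ carries an invariant K\"ahler structure, hence an invariant symplectic form. As $\mathfrak{g}$ is semisimple, Theorem \ref{thm:sympl-descr} then produces an injective $Z\in Z(\mathfrak{h})$; because $\dim Z(\mathfrak{h})=1$ this $Z$ spans $Z(\mathfrak{h})$, so $[Z,\mathfrak{h}]=0$ and $ad_{Z}$ is injective on $\mathfrak{m}_{A}$.

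Next I would produce the transported element. Since $\sigma$ preserves $\mathfrak{h}$ it preserves $Z(\mathfrak{h})$, and $\sigma^{2}=id$ together with $\dim Z(\mathfrak{h})=1$ force $\sigma(Z)=\varepsilon Z$ with $\varepsilon\in\{1,-1\}$. Put $Z^{\ast}:=Z$ if $\varepsilon=1$ and $Z^{\ast}:=\sqrt{-1}\,Z$ if $\varepsilon=-1$. In the first case $Z\in\mathfrak{h}_{1}$ and in the second $Z\in\mathfrak{h}_{2}$, so in both cases $Z^{\ast}\in\mathfrak{h}^{\ast}=\mathfrak{h}_{1}\oplus\sqrt{-1}\,\mathfrak{h}_{2}$, and $Z^{\ast}\neq 0$. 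Moreover $[Z,\mathfrak{h}]=0$ gives $[Z^{\ast},\mathfrak{h}_{1}]=0$ and $[Z^{\ast},\sqrt{-1}\,\mathfrak{h}_{2}]=0$; as $\mathfrak{h}_{1}$ and $\sqrt{-1}\,\mathfrak{h}_{2}$ span $\mathfrak{h}^{\ast}$ over $\mathbb{R}$, this shows $Z^{\ast}\in Z(\mathfrak{h}^{\ast})$.

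Finally I would check injectivity on the reductive complement $\mathfrak{m}_{A}^{\ast}$ of the $3$-symmetric space $G^{\ast}/H^{\ast}$ obtained by the canonical procedure. From $\mathfrak{g}^{\ast}=\mathfrak{g}^{\sigma}+\sqrt{-1}\,\mathfrak{m}$ together with $\mathfrak{g}^{\sigma}=\mathfrak{h}_{1}\oplus\mathfrak{m}_{1}$ and $\mathfrak{m}=\mathfrak{h}_{2}\oplus\mathfrak{m}_{2}$ one reads off $\mathfrak{m}_{A}^{\ast}=\mathfrak{m}_{1}\oplus\sqrt{-1}\,\mathfrak{m}_{2}$, which is a real form of $\mathfrak{m}_{A}^{\mathbb{C}}\subset\mathfrak{g}^{\mathbb{C}}$. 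Extended complex-linearly, $ad_{Z}$ preserves $\mathfrak{m}_{A}^{\mathbb{C}}$ and is injective there, since it is injective on the real form $\mathfrak{m}_{A}$; because $ad_{Z^{\ast}}$ equals $ad_{Z}$ or $\sqrt{-1}\,ad_{Z}$ on $\mathfrak{g}^{\mathbb{C}}$, it is likewise injective on $\mathfrak{m}_{A}^{\mathbb{C}}$, hence on the subspace $\mathfrak{m}_{A}^{\ast}$ (which it maps into itself by reductivity of $G^{\ast}/H^{\ast}$). Thus $Z^{\ast}$ is an injective element of the $3$-symmetric pair $(\mathfrak{g}^{\ast},\phi)$, and Theorem \ref{thm:sympl-descr} gives that $G^{\ast}/H^{\ast}$ is symplectic. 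I expect the one delicate point to be the identification $\mathfrak{m}_{A}^{\ast}=\mathfrak{m}_{1}\oplus\sqrt{-1}\,\mathfrak{m}_{2}$ and the fact that $\mathfrak{h}^{\ast}$ is exactly the fixed algebra of the transferred order-$3$ automorphism; this rests on the compatibility between $\sigma$ and the order-$3$ automorphism built into the canonical procedure (Lemma \ref{aa1}). Once that is granted, the remainder is the elementary remark that a complex-linear isomorphism restricts to an isomorphism on every real form it preserves.
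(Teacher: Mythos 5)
Your argument is correct and essentially reproduces the paper's proof: both use the one-dimensionality of $Z(\mathfrak{h})$ to force the injective element $Z$ into a $\sigma$-eigenspace, transport it (multiplying by $\sqrt{-1}$ when $\sigma(Z)=-Z$) into $\mathfrak{h}^{\ast}=\mathfrak{h}_{1}\oplus\sqrt{-1}\,\mathfrak{h}_{2}$, and verify injectivity of $ad_{Z^{\ast}}$ on $\mathfrak{m}_{1}\oplus\sqrt{-1}\,\mathfrak{m}_{2}$ by separating real and imaginary components. Your two additions --- deriving the existence of $Z$ upstairs from Theorem \ref{95} and Theorem \ref{thm:sympl-descr} rather than assuming it, and treating the $\sigma(Z)=-Z$ case explicitly where the paper writes ``without loss of generality'' --- are harmless refinements of the same route.
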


\begin{proof}
Every element $g\in \mathfrak{g}^{\ast}$ and $h\in \mathfrak{h}^{\ast}$ can be written as
$$g=h^{1}+m^{1}_{A}+ih^{2}+im^{2}_{A} \ \ \ h=h^{'}+ih^{''},$$
for $h^{1}, \ h^{2}, \ h^{'}, \ h^{''} \in \mathfrak{h}$ and $m^{1}_{A}, \ m^{2}_{A} \in \mathfrak{m}_{A}.$ Moreover, an automorphism $\sigma$ acts as the identity on $h^{1}, \ h^{'}, \ m^{1}_{A}$ and minus the identity on $h^{2}, \ h^{''}, \ m^{2}_{A}.$
Let $Z=h^{'}+h^{''}$ be an injective element in the center of $\mathfrak{h},$ and let $h=h_{1}+h_{2}$ be any element in $\mathfrak{h}$ decomposed with respect to $\sigma $. The following equalities hold
$$[h^{'}+h^{''},h_{1}]=0$$
$$[h^{'},h_{1}]+[h^{''},h_{1}]=0$$
$$\sigma ([h^{'},h_{1}]+[h^{''},h_{1}])= \sigma (0)$$
$$[h^{'},h_{1}]-[h^{''},h_{1}]=0$$
We have $[h^{'},h_{1}]=[h^{''},h_{1}]=0,$ analogously
$$[h^{'}+h^{''},h_{2}]=0$$
gives us $[h^{'},h_{2}]=[h^{''},h_{2}]=0.$ Therefore, $h^{'},h^{''} \in Z(\mathfrak{h}).$ The center of $\mathfrak{h}$ is one-dimensional, and $\sigma$ acts on $h^{'}$ by $id,$ and by $-id$ on $h^{''},$ so $h^{'}=0$ or $h^{''}=0.$ Without loss of generality assume $h^{''}=0,$ therefore $h^{'}$ is injective. Let:
$$[h^{'},h^{1}+m^{1}_{A}+ih^{2}+im^{2}_{A}]=0$$
for any $g=h^{1}+m^{1}_{A}+ih^{2}+im^{2}_{A}$ in $\mathfrak{g}^{\ast}.$ Vector $Z=h^{'}$ is in the center of $\mathfrak{h}$, so
$$[h^{'},h^{1}+m^{1}_{A}+ih^{2}+im^{2}_{A}]=[h^{'},m^{1}_{A}+im^{2}_{A}]=[h^{'},m^{1}_{A}]+i[h^{'},m^{2}_{A}]=0$$
therefore $m^{1}_{A}=m^{2}_{A}=0$, because $Z$ is injective on $\mathfrak{m}_{A}.$
\end{proof}

\noindent
We also need the following properties.

\begin{theorem}[Theorem 13.3(1) in \cite{mm}]
Let $X=G/H,$ where $H$ is a connected subgroup of maximal rank in a compact connected Lie group $G.$ Let $\sigma$ be an automorphism of $G$ which preserves $H,$ thus acts on $X,$ and preserves some $G$-invariant almost complex structure on $X.$ Then the following conditions are equivalent, and each implies that $\sigma$ preserves every $G$-invariant almost complex structure on $X:$
\begin{enumerate}
\item $\sigma$ is an inner automorphism of $G,$
\item $\sigma =ad(k)$ for some element $k\in H.$ 
\end{enumerate} 
\end{theorem}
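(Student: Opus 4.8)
The plan is to pass to the infinitesimal picture and reduce the whole statement to a rigidity property of the Weyl group. Write $\mathfrak g=\mathrm{Lie}(G)$, $\mathfrak h=\mathrm{Lie}(H)$, fix a maximal torus $T\subseteq H$ --- which, by the maximal-rank hypothesis, is also maximal in $G$ --- and put $\mathfrak t=\mathrm{Lie}(T)$. Decompose $\mathfrak g^{\mathbb C}=\mathfrak t^{\mathbb C}\oplus\bigoplus_{\alpha\in\Delta}\mathfrak g_{\alpha}$ and split $\Delta=\Delta_{\mathfrak h}\sqcup\Delta_{\mathfrak m}$ into the roots occurring in $\mathfrak h^{\mathbb C}$ and the complementary ones, so that $\mathfrak m^{\mathbb C}=\bigoplus_{\alpha\in\Delta_{\mathfrak m}}\mathfrak g_{\alpha}$ is the complexified isotropy representation. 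A $G$-invariant almost complex structure on $X$ is the same thing as an $\mathrm{Ad}(H)$-invariant complex structure $J$ on $\mathfrak m$, and since $\mathrm{Ad}(T)$ acts on the root spaces by pairwise distinct characters, such a $J$ is given by a subset $\Delta_{\mathfrak m}^{+}\subseteq\Delta_{\mathfrak m}$ with $\Delta_{\mathfrak m}=\Delta_{\mathfrak m}^{+}\sqcup(-\Delta_{\mathfrak m}^{+})$ and $\mathfrak m^{1,0}=\bigoplus_{\alpha\in\Delta_{\mathfrak m}^{+}}\mathfrak g_{\alpha}$; moreover $\mathrm{Ad}(H)$-invariance makes $\Delta_{\mathfrak m}^{+}$ stable under $W(H):=W(\Delta_{\mathfrak h})$. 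The classical input I will use is that the invariant almost complex structure $J$ preserved by $\sigma$ is cut out by an ordering of $\Delta$: there is a positive system $\Delta^{+}$ of $\Delta$ with $\Delta_{\mathfrak m}^{+}=\Delta^{+}\cap\Delta_{\mathfrak m}$, and then $\Delta_{\mathfrak h}^{+}:=\Delta^{+}\cap\Delta_{\mathfrak h}$ is a positive system of $\Delta_{\mathfrak h}$. (This holds in particular for every invariant complex structure.)

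The implication (2)$\Rightarrow$(1) is trivial, since $\mathrm{ad}(k)$ is by definition inner. It is equally quick to dispose of the last clause: if $\sigma=\mathrm{ad}(k)$ with $k\in H$, then on $X$ one has $\sigma(gH)=kgk^{-1}H=kgH$, i.e. $\sigma$ acts on $X$ as the left translation $L_{k}$ by an element of $G$; such a translation preserves every $G$-invariant tensor, in particular every $G$-invariant almost complex structure. Since (1) and (2) are to be shown equivalent, each of them therefore implies the last clause. So the whole weight of the theorem rests on the implication (1)$\Rightarrow$(2).

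To prove (1)$\Rightarrow$(2) I would first normalize. As $\sigma(H)=H$, both $T$ and $\sigma(T)$ are maximal tori of the compact connected group $H$, so there is $h\in H$ with $\mathrm{ad}(h)\sigma(T)=T$; replacing $\sigma$ by $\mathrm{ad}(h)\circ\sigma$ affects neither the inner/outer nature of $\sigma$, nor the validity of (2), nor --- by the previous paragraph, $\mathrm{ad}(h)$ acting as $L_{h}$ --- the property of preserving some invariant almost complex structure. Hence I may assume $\sigma(T)=T$, so that $\sigma=\mathrm{ad}(g)$ with $g\in N_{G}(T)$, and $\sigma$ acts on $\mathfrak t$ as a Weyl element $w\in W(G)$, sending $\mathfrak g_{\alpha}$ to $\mathfrak g_{w\alpha}$. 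From $\sigma(\mathfrak h)=\mathfrak h$ we get $w(\Delta_{\mathfrak h})=\Delta_{\mathfrak h}$; from the fact that $\sigma$ preserves $J$ we get $w(\Delta_{\mathfrak m}^{+})=\Delta_{\mathfrak m}^{+}$. The heart of the argument is the Weyl-theoretic claim that these two conditions force $w\in W(H)$. To prove it, note that $w(\Delta_{\mathfrak h}^{+})$ is again a positive system of $\Delta_{\mathfrak h}$, so there is $u\in W(H)$ with $u(w(\Delta_{\mathfrak h}^{+}))=\Delta_{\mathfrak h}^{+}$; since $u\in W(H)$ preserves the $\mathrm{Ad}(H)$-stable set $\Delta_{\mathfrak m}^{+}$, the element $uw$ preserves $\Delta_{\mathfrak h}^{+}$ and $\Delta_{\mathfrak m}^{+}$, hence preserves $\Delta^{+}=\Delta_{\mathfrak h}^{+}\sqcup\Delta_{\mathfrak m}^{+}$. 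But a Weyl element stabilizing a positive system --- equivalently, the corresponding fundamental chamber --- is the identity, so $uw=e$ and $w=u^{-1}\in W(H)$.

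It remains to promote $w\in W(H)$ to statement (2). Choose $n_{0}\in N_{H}(T)$ representing $w$; then $\mathrm{ad}(n_{0})^{-1}\circ\sigma$ is an inner automorphism of $G$ that acts trivially on $\mathfrak t$, hence equals $\mathrm{ad}(g')$ for some $g'$ centralizing $T$, and $Z_{G}(T)=T$ gives $g'\in T$. Therefore $\sigma=\mathrm{ad}(n_{0})\circ\mathrm{ad}(g')=\mathrm{ad}(n_{0}g')$ with $n_{0}g'\in N_{H}(T)\subseteq H$, which is (2). The genuinely delicate point is the one invoked in the first paragraph --- that the invariant almost complex structure under consideration arises from an ordering of $\Delta$, so that the fundamental-chamber rigidity applies --- and it is precisely there that one uses that $\sigma$ preserves an almost complex structure rather than an arbitrary invariant tensor; outside the integrable case this requires the Borel--Hirzebruch structure theory of invariant almost complex structures on maximal-rank homogeneous spaces. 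The remaining ingredients --- the interplay of $N_{G}(T)$, $W(G)$ and $Z_{G}(T)=T$ --- are routine.
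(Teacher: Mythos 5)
The paper does not actually prove this statement: it is imported wholesale as Theorem 13.3(1) of Wolf's Acta paper, so there is no in-text argument to compare yours with. Judged on its own, your skeleton is the standard one and all of its routine parts are correct: (2)$\Rightarrow$(1) and (2)$\Rightarrow$(preserves every invariant structure) via $\sigma$ acting on $X$ as the left translation $L_{k}$; the normalization $\sigma(T)=T$ by composing with $\mathrm{ad}(h)$, $h\in H$; the reduction to a Weyl element $w$ stabilizing $\Delta_{\mathfrak h}$ and $\Delta_{\mathfrak m}^{+}$; the identification $W(H)=W(\Delta_{\mathfrak h})$ for connected $H$ of maximal rank; the simple transitivity of $W$ on positive systems; and the lift back through $Z_{G}(T)=T$.

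The gap is exactly at the step you flag as delicate, and it is not a removable technicality: the claim that the $\sigma$-invariant almost complex structure is ``cut out by an ordering of $\Delta$'' is false. The Borel--Hirzebruch description you invoke says the opposite: $G$-invariant almost complex structures on $G/H$ correspond to \emph{arbitrary} $W(\Delta_{\mathfrak h})$-stable splittings $\Delta_{\mathfrak m}=\Delta_{\mathfrak m}^{+}\sqcup(-\Delta_{\mathfrak m}^{+})$, and only the integrable ones extend to positive systems of $\Delta$. The standard example is $X=SU(3)/T$ with $\Delta_{\mathfrak m}^{+}=\{\alpha,\beta,-\alpha-\beta\}$ (the nearly K\"ahler structure): this set lies in no positive system, and its stabilizer in $W=S_{3}$ is the cyclic group of order $3$ rather than $W(H)=\{e\}$. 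Consequently conjugation by the permutation matrix of a $3$-cycle is an inner automorphism of $SU(3)$ preserving $T$ and preserving this invariant almost complex structure, yet it is not $\mathrm{ad}(k)$ for any $k\in T$; so the implication (1)$\Rightarrow$(2) genuinely fails for that structure, and no completion of your argument in the stated generality is possible because the statement as transcribed is itself too strong. Your proof is complete under the additional assumption that the preserved structure is integrable (or that the relevant $\Delta_{\mathfrak m}^{+}$ extends to a positive system); note also that in the paper's only application $\sigma$ is an involution, and a nontrivial involution of $W$ sends some root to its negative, which kills the $G/T$-type counterexamples. To state and prove the result correctly you would need to return to Wolf's original formulation and identify which hypothesis has been suppressed in the quotation.
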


\noindent
We also have theorem 7.7(ii) from \cite{g1} for a special case $\Sigma= \{ 1,\phi, \phi^{2} \}$ where $\phi$ is an automorphism of $\mathfrak{g}^{\ast},$ which defines a 3-symmetric structure.

\begin{theorem}
Let $X=G/H$ where $G$ is compact connected Lie group with closed and connected subgroup $H$ and let $G$ act effectively on $X.$ Let $\sigma$ be an involutive automorphism of $G$ which preserves $H.$ Let $X^{\ast}=G^{\ast}/K^{\ast}$ be the simply connected space described in lemma \ref{aa1}. Then:
\begin{enumerate}
\item $G$-invariant and $\sigma$-invariant almost complex structures on $X$ are in one-to-one correspondence with $G^{\ast}$-invariant and $\sigma$-invariant almost complex structures on $X^{\ast}.$
\item $X^{\ast}$ has a $G^{\ast}$-invariant and $\sigma$-invariant almost complex structure.
\end{enumerate}
\end{theorem}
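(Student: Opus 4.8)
The plan is to push everything down to linear algebra on the reductive complements and then transport structures along the $\sqrt{-1}$-twist that is built into the construction of Lemma \ref{aa1}. Since $H$ and $K^{*}$ are connected (the latter because $X^{*}$ is simply connected and $G^{*}$ is connected), a $G$-invariant (resp.\ $G^{*}$-invariant) almost complex structure on $X$ (resp.\ $X^{*}$) is the same thing as an endomorphism $J$ of $\mathfrak{m}_{A}$ (resp.\ $J^{*}$ of $\mathfrak{m}_{A}^{*}$) with $J^{2}=-\mathrm{id}$ which commutes with $ad_{W}$ for every $W\in\mathfrak{h}$ (resp.\ $\mathfrak{h}^{*}$); it is $\sigma$-invariant precisely when it also commutes with $\sigma$ restricted to $\mathfrak{m}_{A}$ (resp.\ $\mathfrak{m}_{A}^{*}$), using that $\sigma(\mathfrak{m}_{A})=\mathfrak{m}_{A}$, which was established above. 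So I would work with the $\sigma$-eigenspace decompositions $\mathfrak{h}=\mathfrak{h}_{1}\oplus\mathfrak{h}_{2}$, $\mathfrak{m}_{A}=\mathfrak{m}_{1}\oplus\mathfrak{m}_{2}$ (subscript $1$ for the $(+1)$-part, lying in $\mathfrak{g}^{\sigma}$; subscript $2$ for the $(-1)$-part, lying in $\mathfrak{m}$), so that inside $\mathfrak{g}^{\mathbb{C}}$ one has $\mathfrak{h}^{*}=\mathfrak{h}_{1}\oplus\sqrt{-1}\,\mathfrak{h}_{2}$ and $\mathfrak{m}_{A}^{*}=\mathfrak{m}_{1}\oplus\sqrt{-1}\,\mathfrak{m}_{2}$.

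For part (1) I would introduce the real-linear isomorphism $\Phi:\mathfrak{m}_{A}\to\mathfrak{m}_{A}^{*}$ that is the identity on $\mathfrak{m}_{1}$ and multiplication by $\sqrt{-1}$ on $\mathfrak{m}_{2}$, and send an almost complex structure $J$ on $X$ to $J^{*}:=\Phi J\Phi^{-1}$. If $J$ is $\sigma$-invariant it commutes with $\sigma|_{\mathfrak{m}_{A}}$, hence preserves each of $\mathfrak{m}_{1},\mathfrak{m}_{2}$, so $J^{*}$ is well defined, satisfies $(J^{*})^{2}=-\mathrm{id}$, and preserves $\mathfrak{m}_{1}$ and $\sqrt{-1}\,\mathfrak{m}_{2}$, which are the $\pm1$-eigenspaces of $\sigma|_{\mathfrak{m}_{A}^{*}}$, so $J^{*}$ is $\sigma$-invariant. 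For $ad_{\mathfrak{h}^{*}}$-invariance it is enough to test on $W\in\mathfrak{h}_{1}$ and on $W=\sqrt{-1}\,V$ with $V\in\mathfrak{h}_{2}$; feeding in the $\mathbb{Z}_{2}$-graded bracket relations $[\mathfrak{h}_{1},\mathfrak{m}_{1}]\subseteq\mathfrak{m}_{1}$, $[\mathfrak{h}_{1},\mathfrak{m}_{2}]\subseteq\mathfrak{m}_{2}$, $[\mathfrak{h}_{2},\mathfrak{m}_{1}]\subseteq\mathfrak{m}_{2}$, $[\mathfrak{h}_{2},\mathfrak{m}_{2}]\subseteq\mathfrak{m}_{1}$ (valid because $\sigma$ is a grading and $[\mathfrak{h},\mathfrak{m}_{A}]\subseteq\mathfrak{m}_{A}$) and tracking the factors of $\sqrt{-1}$, the identity $[ad_{W},J^{*}]=0$ collapses to $[ad_{W'},J]=0$ for the corresponding $W'\in\mathfrak{h}$, which holds by hypothesis. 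Running the same construction with the roles of $X$ and $X^{*}$ interchanged — here one uses that the $\sigma$-eigenspaces of $\mathfrak{g}^{*}$ are $\mathfrak{g}^{\sigma}$ and $\sqrt{-1}\,\mathfrak{m}$, and that applying the $*$-operation to $\mathfrak{g}^{*}$ returns $\mathfrak{g}$ — yields the inverse map, so $J\mapsto J^{*}$ is the asserted bijection.

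For part (2) I would exhibit a $\sigma$-invariant $G$-invariant almost complex structure on $X$ and transport it by part (1). The $3$-symmetric space $X=G/H$ carries its canonical $G$-invariant almost complex structure $J_{0}$, characterized on $\mathfrak{m}_{A}^{\mathbb{C}}$ by having the $\omega$-eigenspace of the order-$3$ automorphism $\phi$ (with $\omega$ a primitive cube root of unity) as its $(+\sqrt{-1})$-eigenspace; equivalently $J_{0}$ is a real polynomial in $\phi|_{\mathfrak{m}_{A}}$. Since in the situation at hand $\sigma$ may be taken to be an inner automorphism induced by an element of $H$ (as noted in Step~2 above, because $G$ is compact), the quoted Theorem 13.3(1) of \cite{mm} shows that $\sigma$ preserves every $G$-invariant almost complex structure on $X$, in particular $J_{0}$ — equivalently $\sigma$ commutes with $\phi$ on $\mathfrak{m}_{A}$. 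Hence $J_{0}$ is $G$-invariant and $\sigma$-invariant, and part (1) carries it to a $G^{*}$-invariant and $\sigma$-invariant almost complex structure on $X^{*}$. I expect the main obstacle to be the bookkeeping in part (1): one must arrange the factors of $\sqrt{-1}$ and the $\sigma$-eigenspace memberships of the various brackets carefully enough that $ad_{\mathfrak{h}^{*}}$-invariance becomes genuinely equivalent to (not merely implied by) $ad_{\mathfrak{h}}$-invariance; the other delicate point is ensuring that the almost complex structure one starts from in part (2) really is $\sigma$-invariant, which is exactly where the freedom to take $\sigma$ inner and the appeal to \cite{mm} enter.
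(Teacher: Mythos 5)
First, a point of comparison: the paper does not prove this statement at all --- it is quoted (specialized to $\Sigma=\{1,\phi,\phi^2\}$) as Theorem 7.7(ii) of Gray--Wolf \cite{g1} and used as a black box, so your attempt is necessarily a different route from the paper's, namely an actual argument rather than a citation. Your part (1) is correct and is essentially the Gray--Wolf duality argument: after reducing to $ad$-equivariant endomorphisms of the reductive complements (using connectedness of $H$, and of $K^{*}$ via simple connectedness of $X^{*}$), conjugation by the real-linear isomorphism $\Phi$ which is the identity on $\mathfrak{m}_{1}$ and multiplication by $\sqrt{-1}$ on $\mathfrak{m}_{2}$ does give the bijection, and the $\sqrt{-1}$-bookkeeping against the relations $[\mathfrak{h}_{i},\mathfrak{m}_{j}]\subseteq\mathfrak{m}_{i+j}$ closes in both directions as you indicate, since $ad_{\sqrt{-1}V}$-equivariance of $J^{*}$ is literally equivalent to $ad_{V}$-equivariance of $J$ for $V\in\mathfrak{h}_{2}$.

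Part (2) has a genuine gap. The involution $\sigma$ is part of the given data --- it is what determines $G^{*}$ and $X^{*}$ in Lemma \ref{aa1} --- so you are not free to ``take $\sigma$ to be inner''; different $\sigma$ give different noncompact duals, and the paper's Table 3 records exactly the spaces in NC1 arising from \emph{outer} $\sigma$ (which is why the paper treats Case 2 separately after quoting this theorem). For outer $\sigma$ the appeal to Wolf's Theorem 13.3(1) is unavailable, and the $\sigma$-invariance of the canonical structure $J_{0}=\tfrac{1}{\sqrt{3}}(\phi-\phi^{-1})|_{\mathfrak{m}_{A}}$ is precisely the nontrivial point: $\sigma(\mathfrak{h})=\mathfrak{h}$ does not rule out $\sigma\phi\sigma^{-1}=\phi^{-1}$ on $\mathfrak{m}_{A}$, in which case $\sigma J_{0}\sigma^{-1}=-J_{0}$ and $J_{0}$ is \emph{not} $\sigma$-invariant. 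Even in the inner case your citation of Theorem 13.3(1) overreaches: as quoted it assumes both that $H$ has maximal rank (true in class C1, but not a hypothesis of the statement you are proving) and that $\sigma$ already preserves \emph{some} $G$-invariant almost complex structure, which is essentially what you are trying to establish. The inner case can instead be closed directly: if $\sigma=ad(k)$ with $k\in H$, then $\sigma|_{\mathfrak{m}_{A}}=Ad(k)|_{\mathfrak{m}_{A}}$ commutes with every $Ad(H)$-equivariant $J$ by definition, so no citation is needed; but the outer case requires a genuinely separate argument that your proposal does not supply.
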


Now we can continue our classification of symplectic 3-symmetric spaces. We will show, that all spaces in {NC1} are symplectic. To do this, we have to prove, that every space in this class has an injective element. All such spaces are constructed from $G/H$ from {C1}, and so $G/H$ has an invariant K\"ahler structure $((\cdot,\cdot),\omega,J).$  
Note that in what follows we will use the well-known relation between the  Riemannian metric $(\cdot,\cdot)$, the complex structure $J$ and the K\"ahler (symplectic) form $\omega$:

$$(\cdot,\cdot)=\omega(\cdot,J\cdot).$$
Also, we will consider invariant almost complex structures on reductive homogeneous spaces $G/H$. A general description of such structures can be found in \cite{kn} (Proposition 6.5, Chapter X). We use the following: there is a one-to-one correspondence between $G$-invariant almost complex structures on $G/H$ and linear endomorphisms $J: \frak{m}\rightarrow \frak{m}$such that $J^2=-id$ and $J\circ Ad\,h=Ad\,h\circ J$ for any $h\in H$. If $H$ is connected, the latter is equivalent to the identity
$$J\circ ad\,t=ad\,t\circ J, \forall t\in\frak{h}.$$

 We will consider two cases determined by the type of involutive automorphism $\sigma$ (it may be inner or outer). We should keep in mind that the spaces belonging to the class NC1 are obtained by the canonical procedure from C1 and correspond either to inner $\sigma$, or to outer $\sigma$. For example, all non-compact 3-symmetric spaces $G^*/H^*$ from Table 3 {\it belong to} NC1. 
\newline
{\bf \em Case 1}: $\sigma$ is an inner automorphism. Then, based on above properties, $\sigma = ad(k)$ for some $k\in H$ and $\sigma$ preserves any $G$-invariant almost complex structure. Assume that the injective element for the form $\omega$ is $Z=h^{'}+h^{''}$ (the decomposition is taken with respect to $\sigma)$). As in the proof of Lemma \ref{cen1} we show that $h^{'},h^{''} \in Z(\mathfrak{h}).$ Then
$$\forall_{X,Y\in \mathfrak{m}_A} \ \omega(X,Y)=B(h^{'}+h^{''},[X,Y]).$$
Assume that $h^{'}\neq 0$ and $M_{1}+M_{2}\neq 0$ is in the kernel of $ad_{h'}$. Then
$$[h^{'},M_{1}+M_{2}]=0$$
$$\sigma([h^{'},M_{1}+M_{2}])=\sigma(0)$$
$$[h^{'},M_{1}-M_{2}]=0$$
so $[h^{'},M_{1}]=[h^{'},M_{2}]=0.$ Without loss of generality take $M_{1}\neq 0.$ Because $J$ is $G$ - invariant we have $0=J[h^{'},M_{1}]=[h^{'},JM_{1}].$ Therefore:
$$0 < (M_{1},M_{1}) = \omega(JM_{1},M_{1})=B(h^{'}+h^{''},[JM_{1},M_{1}])=B(h^{''},[JM_{1},M_{1}])$$
Because $J$ is $\sigma$-invariant and $\sigma(M_{1})=M_{1}$ we have:
$$B(h^{''},[JM_{1},M_{1}])=B(\sigma (h^{''}),\sigma([JM_{1},M_{1}]))=-B(h^{''},[JM_{1},M_{1}]),$$

$$0<B(h^{''},[JM_{1},M_{1}])=0,$$
a contradiction. Therefore either $h^{'}=0$ or $h^{'}$ has trivial kernel. As in lemma \ref{cen1}  $h^{'}$ or $h^{''}$ is injective and $\mathfrak{h}^{\ast}$ has an injective element.
\\
{\bf \em Case 2}: $\sigma$ is outer. From \cite{g1} we know that all spaces in {NC1} which correspond to an outer automorphism are presented in Table 3. The first two manifolds are constructed from $G=SU(n)/\mathbb{Z}_{n}, \ H=S\{ U(a)\times U(a)/\mathbb{Z}_n  \}$ for $1\leq a\leq n-1.$ Here the center of the isotropy subgroup is one-dimensional, so the existence of an injective element follows from Lemma \ref{cen1}. The last space comes from $G=SO(2n)/\mathbb{Z}_{2}, \ H=\{ U(a)\times SO(2n-2a)  \}/\mathbb{Z}_{2}$ for $1\leq a\leq n-1.$ When $a\neq n-1$ we can again use Lemma \ref{cen1}.
\\
For $a=n-1$ we have $G=SO(2n)/\mathbb{Z}_{2}, \ H=\{ U(n-1)\times SO(2)  \}/\mathbb{Z}_{2}$ and $\sigma = ad(k)$ where $k=diag(k_{1},k_{2}), \ k_{1}\in U(n-1), \ k_{2}\in O(2)$ and $detk_{2}=-1.$ But $G=SO(2n)/\mathbb{Z}_{2}, \ H=\{ U(n-1)\times SO(2)  \}/\mathbb{Z}_{2}$ and $G=SO(2n)/\mathbb{Z}_{2}, \ H=\{ U(n-1)\times O(2)  \}/\mathbb{Z}_{2}$ share the same 3-symmetric pair $(\mathfrak{g}, \nu).$ We have exactly the same canonical decomposition $\mathfrak{g} =\mathfrak{h} + \mathfrak{m}_A$.  Therefore, on the level of the Lie algebras we can treat $\sigma$ as an inner automorphism, and thus every $G$-invariant almost complex structure on $\mathfrak{m}_A$ is $\sigma$-invariant. We can continue as in Case 1.

We should also notice that a direct calculation shows that the following lemma is valid.
\begin{lemma}
The complexification of a Lie algebra   with an injective element has an injective element.
\end{lemma}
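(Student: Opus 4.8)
The statement concerns a semisimple $k$-symmetric pair $(\mathfrak{g},\nu)$ with an injective element $Z\in Z(\mathfrak{h})$, and asserts that $Z$ (viewed inside the complexification) remains injective. My plan is to pass to $\mathfrak{g}^{\mathbb{C}}=\mathfrak{g}\otimes_{\mathbb{R}}\mathbb{C}$, extend $\nu$ by $\mathbb{C}$-linearity to an automorphism $\nu^{\mathbb{C}}$ of order $k$, and observe that $A^{\mathbb{C}}:=\mathrm{id}-\nu^{\mathbb{C}}$ is simply the complexification of $A=\mathrm{id}-\nu$. Consequently the Fitting decomposition of $\mathfrak{g}^{\mathbb{C}}$ with respect to $A^{\mathbb{C}}$ is the complexification of the Fitting decomposition of $\mathfrak{g}$ with respect to $A$; that is, $\mathfrak{h}^{\mathbb{C}}=\ker A^{\mathbb{C}}=\mathfrak{h}\otimes\mathbb{C}$ and $\mathfrak{m}^{\mathbb{C}}=\mathrm{Im}\,A^{\mathbb{C}}=\mathfrak{m}\otimes\mathbb{C}$. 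This identifies the canonical decomposition of the complexified pair as the complexification of the original one, so in particular $Z$ still lies in $\mathfrak{h}^{\mathbb{C}}$.

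Next I would check that $Z$ remains central in $\mathfrak{h}^{\mathbb{C}}$. This is immediate: $\mathrm{ad}_Z$ is a $\mathbb{C}$-linear map on $\mathfrak{g}^{\mathbb{C}}$ whose restriction to the real form $\mathfrak{h}$ vanishes, hence it vanishes on $\mathfrak{h}\otimes\mathbb{C}=\mathfrak{h}^{\mathbb{C}}$; so $Z\in Z(\mathfrak{h}^{\mathbb{C}})$. Finally I would verify injectivity of $\mathrm{ad}_Z$ on $\mathfrak{m}^{\mathbb{C}}$. Write an element of $\mathfrak{m}^{\mathbb{C}}$ as $X+iY$ with $X,Y\in\mathfrak{m}$. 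Then $[Z,X+iY]=[Z,X]+i[Z,Y]$, and this vanishes if and only if $[Z,X]=0$ and $[Z,Y]=0$ separately (these are the real and imaginary parts in the real form $\mathfrak{g}$, which intersect trivially). Since $\ker(\mathrm{ad}_Z|_{\mathfrak{m}})=\{0\}$ by hypothesis, this forces $X=Y=0$, i.e. $\ker(\mathrm{ad}_Z|_{\mathfrak{m}^{\mathbb{C}}})=\{0\}$. Hence $Z$ is an injective element for the complexified pair, and by Theorem~\ref{thm:sympl-descr} (or rather the remark following it) the complexification is again symplectic.

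The argument is essentially bookkeeping, so there is no genuine obstacle; the only point requiring a moment's care is the claim that complexification commutes with the Fitting decomposition, i.e. that $\ker(A^{\mathbb{C}})=(\ker A)^{\mathbb{C}}$ and $\mathrm{Im}(A^{\mathbb{C}})=(\mathrm{Im}\,A)^{\mathbb{C}}$. This follows because these subspaces are cut out by polynomial (indeed linear) conditions with real coefficients: the generalized kernel $\{v:A^N v=0\}$ and the image $A^N(\mathfrak{g})$ for $N=\dim\mathfrak{g}$ are defined over $\mathbb{R}$, so base change to $\mathbb{C}$ preserves them. One should also record that $\nu^{\mathbb{C}}$ still has exact order $k$ — but this is clear since $\nu^{\mathbb{C}}$ and $\nu$ have the same minimal polynomial — and that the primeness condition $[\mathfrak{m}^{\mathbb{C}},\mathfrak{m}^{\mathbb{C}}]_{\mathfrak{h}^{\mathbb{C}}}=\mathfrak{h}^{\mathbb{C}}$ follows by complexifying $[\mathfrak{m},\mathfrak{m}]_{\mathfrak{h}}=\mathfrak{h}$, so that the complexified triple is again a legitimate prime $k$-symmetric symplectic triple. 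This is exactly the "direct calculation" to which the text alludes, so I would keep the write-up to a few lines.
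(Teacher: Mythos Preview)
Your argument is correct and is precisely the ``direct calculation'' the paper invokes without spelling out: complexify the canonical decomposition, note that $Z\in Z(\mathfrak{h})\subset Z(\mathfrak{h}^{\mathbb C})$, and split $[Z,X+iY]=0$ into real and imaginary parts to deduce $X=Y=0$. The additional checks you include (order of $\nu^{\mathbb C}$, primeness of the complexified pair) go slightly beyond what the paper states but are appropriate and correct.
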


\noindent
Classes {NC2} and {NC3} do not contain any symplectic spaces for the same reason as {C2} and {C3}. All spaces from {NC1} are presented in Tables 2 and 3.
We can summarize our considerations as follows.

\begin{theorem}\label{thm:classification}
All simple, simply connected regular 3-symmetric symplectic spaces are given in Table 1 (compact case) and Tables 2 (noncompact case).
\end{theorem}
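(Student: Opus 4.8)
The plan is to assemble the theorem from the equivalences and case analyses already established. First I would invoke Theorem~\ref{thm:triples} to translate the problem: a simply connected regular $3$-symmetric symplectic space corresponds bijectively to a prime symplectic $3$-symmetric triple $(\mathfrak{g},\nu,\Omega)$, and it is \emph{simple} precisely when $\mathfrak{g}$ is a simple Lie algebra. By Proposition~\ref{psp} primeness is then automatic, so the task reduces to enumerating those simple $3$-symmetric pairs $(\mathfrak{g},\nu)$ that are symplectic, which by Theorem~\ref{thm:sympl-descr} means exactly those admitting an injective element $Z\in Z(\mathfrak{h})$ with $\ker(ad_{Z}|_{\mathfrak{m}})=0$.

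Next I would recall the Gray--Wolf classification of simple, simply connected $3$-symmetric spaces $G/H$ with $G$ acting effectively: in the compact case the pairs fall into the families C1, C2, C3, and in the noncompact case into NC1, NC2, NC3, where by Theorem~\ref{thm:correspondence} every noncompact simple $3$-symmetric space is obtained from a compact one by the canonical procedure (or by complexification). It therefore suffices to decide, family by family, which members carry an injective element.

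For the compact case: each space in C1 has $H$ a centralizer of a torus, hence by Theorem~\ref{95} carries an invariant K\"ahler structure and is in particular symplectic, yielding Table~1; while for every space in C2 and C3 the center $Z(\mathfrak{h})$ is discrete, so no injective element exists and these families contribute nothing. For the noncompact case: the argument of Case~1 and Case~2 above --- using Lemma~\ref{cen1} when $\dim Z(\mathfrak{h})=1$, the relation $(\cdot,\cdot)=\omega(\cdot,J\cdot)$ together with the $\sigma$-invariance of $J$ in the inner case, the reduction of the outer entries of Table~3 to inner automorphisms at the Lie-algebra level, and the lemma that complexification preserves injectivity --- shows that \emph{every} member of NC1 admits an injective element and hence is symplectic, giving Tables~2 and~3. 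Families NC2 and NC3 are excluded exactly as C2 and C3, since they are built from the latter and inherit the discreteness of $Z(\mathfrak{h})$. Collecting these facts establishes the theorem.

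I expect the main obstacle to be the negative half of the statement: one must check, running through the explicit Gray--Wolf tables, that in C2, C3, NC2, NC3 the centralizer structure genuinely forces $Z(\mathfrak{h})$ to be discrete, so that the absence of injective elements is intrinsic and not an artifact of a particular presentation of the pair. On the positive side (NC1) the case analysis is already in place, but the bookkeeping of which $\sigma$ (inner versus outer) is attached to each table entry, and the correct matching of the degenerate $a=n-1$ case of $SO(2n)$ with an inner automorphism on the Lie-algebra level, is the other delicate point that must be handled with care.
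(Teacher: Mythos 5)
Your proposal follows the paper's own argument essentially step for step: reduction via Theorem~\ref{thm:triples} and Theorem~\ref{thm:sympl-descr} to the existence of an injective element, the Gray--Wolf trichotomies C1--C3 and NC1--NC3, Theorem~\ref{95} for the compact Kähler case, the discreteness of $Z(\mathfrak{h})$ to exclude C2, C3, NC2, NC3, and the Case~1/Case~2 analysis (with Lemma~\ref{cen1}, the identity $(\cdot,\cdot)=\omega(\cdot,J\cdot)$, and the $a=n-1$ reduction for $SO(2n)$) for NC1. This is exactly how the paper assembles the theorem, so the proposal is correct and takes the same route.
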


\begin{rem} {\rm Note that  Table 2 contains Table 3.}
\end{rem}

\noindent

\newpage
\centerline{\bf Tables}

We use standard notation. However, the sources like \cite{he} and \cite{ov} slightly differ from each other, so one should consult \cite{g1} for the details.

\begin{center}
 \begin{table}[h]
 \centering
 {\footnotesize
 \begin{tabular}{| c | c |}
   \hline
   \multicolumn{2}{|c|}{Table 1. Compact spaces} \\
   \hline                        
   $\textbf{G}$ & $\textbf{K}$ \\
   \hline
   $SU(n)/\mathbb{Z}_{n},$  & $S\{ U(a)\times U(b) \times U(c)  \}/\mathbb{Z}_{n},$  \\
   \footnotesize $n\geq2$ & \footnotesize $0\leq a\leq b\leq c, \ 0<b, \ a+b+c=n$ \\
   \hline
   $SO(2n+1)$ & $U(a) \times SO(2n-2a+1),$ \\
   \footnotesize $n\geq 1$ & \footnotesize $1\leq a\leq n$ \\
   \hline
   $Sp(n)/\mathbb{Z}_{2}$ & $\{ U(a) \times Sp(n-a)  \}/\mathbb{Z}_{2},$ \\
    & \footnotesize $1\leq a \leq n$ \\
   \hline
   $SO(2n)/\mathbb{Z}_{2},$ & $\{ U(a)\times SO(2n-2a)  \}/\mathbb{Z}_{2}$ \\
   \footnotesize $n\geq 3$ & $1\leq a \leq n$ \\
   \hline
   $G_{2}$ & $U(2)$ \\
   \hline
   $F_{4}$ & $\{ Spin(7)\times T^{1} \}/\mathbb{Z}_{2}, \ \ \{ Sp(3)\times T^{1}  \}/\mathbb{Z}_{2}$ \\
   \hline
   $E_{6}/\mathbb{Z}_{3}$ & $\{ SO(10)\times SO(2)  \}/\mathbb{Z}_{2}, \ \ \{ S(U(5)\times U(1))\times SU(2)  \}/\mathbb{Z}_{2},$ \\
    & $\{ [SU(6)/\mathbb{Z}_{3}]\times T^{1} \}/\mathbb{Z}_{2}, \ \ \{ [SO(8)\times SO(2)]\times SO(2) \}/\mathbb{Z}_{2}$ \\
   \hline 
   $E_{7}/\mathbb{Z}_{2}$ & $\{ E_{6}\times T^{1} \}/\mathbb{Z}_{3}, \ \ \{ SU(2)\times [SO(10)\times SO(2)] \}/\mathbb{Z}_{2},$ \\ 
    & $\{ SO(2)\times SO(12) \}/\mathbb{Z}_{2}, \ \ S\{ U(7)\times U(1) \}/\mathbb{Z}_{4}$ \\ 
   \hline 
   $E_{8}$ & $SO(14)\times SO(2), \ \ \{ E_{7}\times T^{1} \}/\mathbb{Z}_{2}$ \\
   \hline  
 \end{tabular}
 }
 \caption{$A/\mathbb{Z}_{n}$ denotes the quotient of $A$ by the central subgroup which is cyclic of order n. If $A=B\times C,$ then $A/\mathbb{Z}_{n}$ is $B\times C$ glued along central cyclic subgroups of order n. $S\{...\}$ denotes elements with determinant equal to 1. $SO^{\ast}(n)$ denotes the real form of $SO(n,\mathbb{C}), \ n=2m$ with maximal compact subgroup $U(m).$ }
 \label{tab1}
 \end{table}
\end{center}

\begin{center}
 \begin{table}
 \centering
 {\footnotesize
 \begin{tabular}{| c | c |}
   \hline
   \multicolumn{2}{|c|}{Table 2. Noncompact spaces} \\
   \hline  
   $G^{\ast}$ & $K^{\ast}$  \\
   \hline \hline 
   $SU(n-m,m)/\mathbb{Z}_{n}$ & $S\{ U(a-s,s)\times U(b-t,t)\times U(c-p,p) \}/\mathbb{Z}_{n}$ \\
     & \footnotesize $a+b+c=n, \ s+t+p=m, \ 0\leq a\leq b \leq c, \ 1\leq b,$    \\
     &  \footnotesize $0\leq 2s \leq a, \ 0\leq 2t \leq b, \ 0\leq 2p \leq c$ \\
   \hline
   $SL(n,\mathbb{R})/\mathbb{Z}_{2}$ & $\{ SL(\frac{n}{2},\mathbb{C})\times T^{1} \}/\mathbb{Z}_{\frac{n}{2}}$ \\
     & \footnotesize $n\equiv 0(2)$ \\
   \hline
   $SL(\frac{n}{2},\mathbb{H})/\mathbb{Z}_{2}$ & $\{ SL(\frac{n}{2},\mathbb{C})\times T^{1} \}/\mathbb{Z}_{\frac{n}{2}}$ \\
     & \footnotesize $n\equiv 0(2)$ \\  
   \hline
   $SL(n,\mathbb{C})/\mathbb{Z}_{n}$ & $S \{ GL(a,\mathbb{C})\times GL(b,\mathbb{C})\times GL(c,\mathbb{C}) \}/\mathbb{Z}_{n}$ \\
     & \footnotesize $a+b+c=n, \  0\leq a\leq b \leq c, \ 1\leq b$ \\
   \hline \hline
   $SO(2n+1-2s-2t,2s+2t)$ & $U(a-s,s)\times SO(2n-2a+1-2t,2t)$ \\
     & \footnotesize  $1\leq a\leq n, \ 0\leq 2s\leq a$ \\
   \hline
   $SO(2n+1, \mathbb{C})$  &  $GL(a,\mathbb{C})\times SO(2n-2a+1, \mathbb{C})$  \\
     & \footnotesize  $1\leq a\leq n$  \\
   \hline \hline
   $Sp(n-s-t,s+t)/\mathbb{Z}_{2}$ & $\{ U(a-s,s)\times Sp(n-a-t,t) \}/\mathbb{Z}_{2}$ \\
     & \footnotesize $1\leq a \leq n, \ 0\leq 2s \leq a, \ 0\leq 2t \leq n-a$ \\
   \hline
   $Sp(n,\mathbb{R})/\mathbb{Z}_{2}$ & $\{ U(a-s,s)\times Sp(n-a, \mathbb{R}) \}/\mathbb{Z}_{2}$ \\
     & \footnotesize $1\leq a \leq n, \ 0\leq 2s \leq a$ \\
   \hline
   $Sp(n,\mathbb{C})/\mathbb{Z}_{2}$ & $\{ GL(a,\mathbb{C})\times Sp(n-a,\mathbb{C}) \}/\mathbb{Z}_{2}$ \\
     & \footnotesize $1\leq a\leq n$ \\
   \hline \hline
   $SO(2n-2s-t,2s+t)/\mathbb{Z}_{2}$ & $\{ U(a-s,s)\times SO(2n-2a-t,t) \}\mathbb{Z}_{2}$ \\
     & \footnotesize $1\leq a \leq n, \ 0\leq 2s \leq a, \ 0\leq t \leq n-a$ \\
   \hline
   $SO^{\ast}(2n)/\mathbb{Z}_{2}$ & $\{ U(a-s,s)\times SO^{\ast}(2n-2a) \}\mathbb{Z}_{2}$ \\
     & \footnotesize $1\leq a \leq n, \ 0\leq 2s \leq a$ \\
   \hline
   $SO(2n,\mathbb{C})/\mathbb{Z}_{2}$ & $\{ GL(a,\mathbb{C})\times SO(2n-2a,\mathbb{C}) \}/\mathbb{Z}_{2}$ \\
     & \footnotesize $1\leq a\leq n$ \\
   \hline \hline
   $G_{2}$ & $U(2)$ \\
   \hline
   $G^{\ast}_{2}=G_{2,A_{1}A_{1}}$ & $U(2), \ U(1,1)$ \\
   \hline
   $G^{\mathbb{C}}_{2}$ & $GL(2,\mathbb{C})$ \\
   \hline \hline
   $F_{4}$ & $\{ Spin(7)\times T^{1} \}/\mathbb{Z}_{2}, \ \{ Sp(3)\times T^{1} \}/\mathbb{Z}_{2}$ \\
   \hline
   $F_{4,B_{4}}$ & $\{ Spin(7-r,r)\times T^{1} \}/\mathbb{Z}_{2}, \ \{ Sp(2,1)\times T^{1} \}/\mathbb{Z}_{2}$ \\
     & \footnotesize $ r=0,1$ \\
   \hline  
   $F_{4,C_{3}C_{1}}$ & $\{ Spin(7-r,r)\times T^{1} \}/\mathbb{Z}_{2}, \ \{ Sp(3-t,t)\times T^{1} \}/\mathbb{Z}_{2}$ \\
     & $\{ Sp(3,\mathbb{R})\times T^{1} \}/\mathbb{Z}_{2}$ \\
     & \footnotesize $r=2,3; \ \ t=0,1$ \\
   \hline  
   $F^{\mathbb{C}}_{4}$ & $\{ Spin(7,\mathbb{C})\times C^{\ast} \}/\mathbb{Z}_{2}, \ \{ Sp(3,\mathbb{C})\times\mathbb{C}^{\ast} \}/\mathbb{Z}_{2}$ \\
   \hline \hline
 \end{tabular}
 }
 \label{tab2}
 \end{table}
\end{center}

\begin{center}
 \begin{table}
 \centering
 {\footnotesize
 \begin{tabular}{| c | c |}
   \hline
   \multicolumn{2}{|c|}{Table 2. Noncompact spaces - cont.} \\
   \hline \hline 
   $E_{6}/\mathbb{Z}_{3}$ & $\{ SO(10)\times SO(2) \}/\mathbb{Z}_{2}$ \\
     &  $\ \{ S(U(5)\times U(1))\times SU(2) \}/\mathbb{Z}_{2}, \ \{ [SU(6)/\mathbb{Z}_{3}]\times T^{1} \}/\mathbb{Z}_{2}$ \\
     & $\{ [SO(8)\times SO(2)]\times SO(2) \}/\mathbb{Z}_{2}$ \\
   \hline
   $E_{6,A_{1}A_{5}}$ & $\{ SO^{\ast}(10)\times SO(2) \}/\mathbb{Z}_{2}, \ \{ SO(6,4)\times SO(2) \}/\mathbb{Z}_{2}$ \\
     & $\{ S(U(5-p,p)\times U(1))\times SU(2-s,s) \}/\mathbb{Z}_{2},$ \\
     & $(s,p)=(0,0),(0,1),(0,2),(1,2)$ \\
     & $\{ [SU(6-p,p)/\mathbb{Z}_{3}]\times T^{1}  \}/\mathbb{Z}_{2}, \ p=0,2,3$ \\
     & $\{ [SO^{\ast}(8)\times SO(2)]\times SO(2) \}/\mathbb{Z}_{2}$ \\
     & $\{ [SO(8-p,p)\times SO(2)]\times SO(2) \}/\mathbb{Z}_{2}, \ p=2,4$ \\
   \hline
   $E_{6,D_{5}T^{1}}$ & $\{ SO(10-p,p)\times SO(2) \}/\mathbb{Z}_{2}, \ \{ SO^{\ast}(10)\times SO(2) \}/\mathbb{Z}_{2}, \ p=0,2$ \\
     & $\{ S(U(5-p,p)\times U(1))\times SU(2-s,s) \}/\mathbb{Z}_{2},$ \\
     & $(s,p)=(1,0),(0,1),(1,1),(0,2)$ \\
     & $\{ [SU(6-p,p)/\mathbb{Z}_{3}]\times T^{1} \}/\mathbb{Z}_{2}, \ p=1,2$ \\
     & $\{ [SO^{\ast}(8)\times SO(2)]\times SO(2) \}/\mathbb{Z}_{2}$ \\
     & $\{ [SO(8-p,p)\times SO(2)]\times SO(2) \}/\mathbb{Z}_{2}, \ p=0,2$ \\
   \hline
   $E^{\mathbb{C}}_{6}/\mathbb{Z}_{3}$ & $\{ SO(10,\mathbb{C})\times \mathbb{C}^{\ast} \}/\mathbb{Z}_{2}$ \\
     & $\{ S(GL(5,\mathbb{C})\times\mathbb{C}^{\ast})\times SL(2,\mathbb{C}) \}/\mathbb{Z}_{2}, \ \{ [SL(6,\mathbb{C})/\mathbb{Z}_{3}]\times \mathbb{C}^{\ast} \}/\mathbb{Z}_{2}$ \\
     & $\{ [SO(8,\mathbb{C})\times \mathbb{C}^{\ast}]\times \mathbb{C}^{\ast} \}/\mathbb{Z}_{2}$ \\
   \hline \hline
   $E_{7}/\mathbb{Z}_{2}$ & $\{ E_{6}\times T^{1} \}/\mathbb{Z}_{3}, \ \{ SU(2)\times [SO(10)\times SO(2)] \}/\mathbb{Z}_{2}$ \\
     & $\{ SO(2)\times SO(12) \}/\mathbb{Z}_{2}, \  S(U(7)\times U(1))/\mathbb{Z}_{4}$ \\
   \hline
   $E_{7,A_{7}}$ & $\{ E_{6,A_{1}A_{5}}\times T^{1} \}/\mathbb{Z}_{2}, \ \{ SU(2)\times [SO^{\ast}(10)\times SO(2)] \}/\mathbb{Z}_{2}$ \\
     & $\{ SU(1,1)\times [SO(6,4)\times SO(2)] \}/\mathbb{Z}_{2}$ \\
     & $\{ SO(2)\times SO^{\ast}(12) \}/\mathbb{Z}_{2}, \  \{ SO(2)\times SO(6,6) \}/\mathbb{Z}_{2}$ \\
     & $S(U(7-p,p)\times U(1))/\mathbb{Z}_{4}, \ p=0,3$ \\
   \hline
   $E_{7,A_{1}D_{6}}$ & $\{ E_{6,D_{5}T^{1}}\times T^{1} \}/\mathbb{Z}_{2}, \ \{ E_{6,A_{1}A_{5}}\times T^{1} \}/\mathbb{Z}_{2}$ \\
     & $\{ SU(2-p,p)\times [SO(10-s,s)\times SO(2)] \}/\mathbb{Z}_{2},$ \\
     & $(p,s)=(0,0),(0,2),(1,2),(0,4)$ \\
     & $\{ SU(1,1)\times [SO^{\ast}(10)\times SO(2)] \}/\mathbb{Z}_{2}$ \\
     & $\{ SO(2)\times SO(12-p,p) \}/\mathbb{Z}_{2}, \  S(U(7-s,s)\times U(1))/\mathbb{Z}_{4},$ \\
     & $p=0,4 \ s=1,2,3$ \\
   \hline
   $E_{7,E_{6}T^{1}}$ & $\{ E_{6}\times T^{1} \}/\mathbb{Z}_{3}, \ \{ E_{6,D_{5}T^{1}}\times T^{1} \}/\mathbb{Z}_{2}$ \\
    & $\{ SU(1,1)\times [SO(10)\times SO(2)] \}/\mathbb{Z}_{2},  \ \{ SU(2)\times [SO^{\ast}(10)\times SO(2)] \}/\mathbb{Z}_{2}$ \\
    & $\{ SO(2)\times SO^{\ast}(12) \}/\mathbb{Z}_{2}, \ \{ SO(2)\times SO(10,2) \}/\mathbb{Z}_{2}$ \\
    & $S(U(7-s,s)\times U(1))/\mathbb{Z}_{4}, \ s=1,2$ \\
   \hline
   $E_{7}^{\mathbb{C}}/\mathbb{Z}_{2}$ & $\{ E_{6}^{\mathbb{C}}\times \mathbb{C}^{\ast} \}/\mathbb{Z}_{3}, \ \{ SL(2,\mathbb{C})\times [SO(10,\mathbb{C})\times \mathbb{C}^{\ast}] \}/\mathbb{Z}_{2}$ \\
    & $\{ \mathbb{C}^{\ast}\times SO(12,\mathbb{C}) \}/\mathbb{Z}_{2}, \ S\{ GL(7,\mathbb{C})\times \mathbb{C}^{\ast} \}/\mathbb{Z}_{4}$ \\
   \hline \hline
   $E_{8}$ & $SO(14)\times SO(2), \ \{ E_{7}\times T^{1} \}/\mathbb{Z}_{2}$ \\
   \hline
   $E_{8,D_{8}}$ & $SO(14)\times SO(2), \ SO(8,6)\times SO(2), \ SO^{\ast}(14)\times SO(2)$ \\
     & $\{ E_{7,A_{1}D_{6}}\times T^{1} \}/\mathbb{Z}_{2}, \ \{ E_{7,A_{7}}\times T^{1} \}/\mathbb{Z}_{2}$ \\
   \hline
   $E_{8,A_{1}E_{7}}$ & $SO(12,2)\times SO(2), \ SO(10,4)\times SO(2), \ SO^{\ast}(14)\times SO(2)$ \\
     & $\{ E_{7}\times T^{1} \}/\mathbb{Z}_{2}, \ \{ E_{7,E_{6}T^{1}}\times T^{1} \}/\mathbb{Z}_{2}, \ \{ E_{7,A_{1}D_{6}}\times T^{1} \}/\mathbb{Z}_{2}$ \\
   \hline
   $E_{8}^{\mathbb{C}}$ & $SO(14,\mathbb{C})\times \mathbb{C}^{\ast}, \ \{ E_{7}^{\mathbb{C}}\times \mathbb{C}^{\ast} \}/\mathbb{Z}_{2}$ \\
   \hline \hline
 \end{tabular}
 }
 \label{tab23}
 \end{table}
\end{center}

\begin{center}
 \begin{table}
 \centering
 {\footnotesize
 \begin{tabular}{| c | c |}
   \hline
   \multicolumn{2}{|c|}{Table 3. Noncompact spaces of outer automorphism} \\
   \hline
   $SL(n,\mathbb{R})/\mathbb{Z}_{2}$ & $\{ SL(\frac{n}{2},\mathbb{C})\times T^{1} \}/\mathbb{Z}_{\frac{n}{2}}$ \\
     & \footnotesize $n\equiv 0(2)$ \\
   \hline
   $SL(\frac{n}{2},\mathbb{H})/\mathbb{Z}_{2}$ & $\{ SL(\frac{n}{2},\mathbb{C})\times T^{1} \}/\mathbb{Z}_{\frac{n}{2}}$ \\
     & \footnotesize $n\equiv 0(2)$ \\
   \hline
   $SO(2n-2s-t,2s+t)/\mathbb{Z}_{2}$ & $\{ U(a-s,s)\times SO(2n-2a-t,t) \}\mathbb{Z}_{2}$ \\
     & \footnotesize $1\leq a \leq n, \ 0\leq 2s \leq a, \ 0\leq t \leq n-a$ \\
   \hline
 \end{tabular}
 }
 \label{tab3}
 \end{table}
\end{center}

\newpage

Department of Mathematics and Computer Science

University of Warmia and Mazury

S\l\/oneczna 54, 10-710, Olsztyn, Poland

e-mail: hoh@poczta.onet.pl (MB), 

tralle@matman.uwm.edu.pl (AT)

\end{document}